\documentclass[11pt,oneside]{amsart}

\usepackage{amsmath,graphicx}

\setlength{\textwidth}{16cm}
\setlength{\textheight}{23cm}
\addtolength{\oddsidemargin}{-20mm}
\addtolength{\topmargin}{-15mm}

\numberwithin{equation}{section}

\newtheorem{theorem}{Theorem}[section]
\newtheorem{proposition}[theorem]{Proposition}
\newtheorem{definition}[theorem]{Definition}
\newtheorem{example}[theorem]{Example}
\newtheorem{remark}[theorem]{Remark}
\newtheorem{lemma}[theorem]{Lemma}
\newtheorem{corollary}[theorem]{Corollary}
\newtheorem{algorithm}[theorem]{Algorithm}

\begin{document}

\begin{flushright}
arXiv: 1607.03569\\
\today\\
\end{flushright}

\vspace{5mm}

\begin{center}

  {\large
    Partition structure and the $A$-hypergeometric distribution\\
    associated with the rational normal curve}

  \vspace{1cm}
    
  Shuhei Mano \footnote{The Institute of Statistical Mathematics,
    10-3 Midori-cho, Tachikawa, Tokyo, 190-8562, Japan;
    E-mail: smano@ism.ac.jp}

  \vspace{1cm}
  
\end{center}

\begin{center}

{\bf Abstract}

\end{center}

A distribution whose normalization constant is an $A$-hypergeometric
polynomial is called an $A$-hypergeometric distribution. Such a distribution
is in turn a generalization of the generalized hypergeometric distribution on
the contingency tables with fixed marginal sums. In this paper, we will see
that an $A$-hypergeometric distribution with a homogeneous matrix of two
rows, especially, that associated with the rational normal curve, appears in
inferences involving exchangeable partition structures. An exact sampling
algorithm is presented for the general (any number of rows) $A$-hypergeometric
distributions. Then, the maximum likelihood estimation of
the $A$-hypergeometric distribution associated with the rational normal curve,
which is an algebraic exponential family, is discussed. The information
geometry of the Newton polytope is useful for analyzing the full and the curved
exponential family. Algebraic methods are provided for evaluating the $A$-hypergeometric polynomials.

\vspace{1cm}

\noindent
MSC2010: 62E15,13P25,60C05 

\noindent
Key words: A-hypergeometric system, algebraic statistics, Bayesian
statistics, exchangeability, information geometry, rational normal curve,
Newton polytope

\vspace{1cm}

\section{Introduction}

The $A$-hypergeometric function introduced by Gel'fand, Kapranov, and
Zelevinsky \cite{gelfand1990} is a solution of the $A$-hypergeometric system
of partial differential equations. The series solution around the origin is
called the $A$-hypergeometric polynomial.
Takayama et al. \cite{takayama2015} called a distribution whose normalization
constant is an $A$-hypergeometric polynomial as an $A$-hypergeometric
distribution. Such a distribution is in turn a generalization of
the generalized hypergeometric distribution on the contingency tables with
fixed marginal sums, and so is of interest in algebraic statistics and
information geometry. In this paper, we will see that this framework with
a homogeneous matrix $A$ of two rows helps inferences involving exchangeable
partition structures.

Exchangeable partition structures appear in count data modeling and sampling
theory, and play important roles in Bayesian statistics (see, e.g.,
\cite{charalambides2005,hjort2010,crane2016,mano2017+}). They have been
studied in the context of combinatorial stochastic processes (see , e.g.,
\cite{aldous1985,arratia2003,pitman2006}). Thanks to known results on
the $A$-hypergeometric system with a homogeneous matrix $A$ of two rows in
the contexts of commutative algebra and algebraic geometry, explicit results
can be obtained and performance of computational methods can be examined
accurately.

This paper is organized as follows. In Section~\ref{sect:Bell}, we will see
that the $A$-hypergeometric system with a homogeneous matrix $A$ of two rows is
associated with an algebraic curve known as a monomial curve. The unique
polynomial solutions of this $A$-hypergeometric system are constant multiples
of the $A$-hypergeometric polynomial. In particular, for the $A$-hypergeometric
system associated with a special monomial curve, called the rational normal
curve, the $A$-hypergeometric polynomial is a constant multiple of
the associated partial Bell polynomial, which was recently defined by
the author \cite{mano2017}.

In the following three sections, we discuss statistical applications. In
Section~\ref{sect:test}, the computational aspects of similar tests that
involve $A$-hypergeometric distributions will be discussed. As an alternative
to the Markov chain Monte Carlo with moves by a Markov basis, an exact sampling
algorithm for general (any number of rows) $A$-hypergeometric distributions
is presented. The algorithm is demonstrated in a goodness of fit test
of a Poisson regression. Section~\ref{sect:exc} sheds light on a connection
with exchangeable partition probability functions (EPPFs).
The $A$-hypergeometric distribution associated with the rational normal curve
appears as the conditional distribution of a general class of EPPFs given
the sufficient statistics. In Section~\ref{sect:MLE}, the maximum likelihood
estimation of the $A$-hypergeometric distribution will be discussed.
The information geometry of the Newton polytope of the $A$-hypergeometric
polynomial works effectively. The p.m.f. (probability mass function) is
an algebraic exponential family. From geometric properties of the Newton
polytope, we see an interesting observation (Theorem~\ref{thm:MLE_exist}):
the maximum likelihood estimator (MLE) of the full exponential family for
a count vector does not exist with probability one. So, we consider a sample
consisting of multiple count vectors and/or curved exponential families.
Gradient-based methods to evaluate the MLE will be discussed. They are
demonstrated in a problem associated with an EPPF that appears in an empirical
Bayes approach.

All the above applications demand practical methods for evaluating
the $A$-hypergeometric polynomials associated with the rational normal curve.
Section~\ref{sect:comp} is devoted to tackling this issue.
The $A$-hypergeometric polynomials satisfy a recurrence relation that comes
from the enumerative combinatorial structure of partial Bell polynomials. Use
of the recurrence relation is a method for evaluating the $A$-hypergeometric
polynomials. Lemma~\ref{lem:pfaffian} gives an explicit expression for
a system of contiguity relations among the $A$-hypergeometric polynomials,
called the Pfaffian system. By virtue of this explicit expression, alternative
algebraic methods for evaluating the $A$-hypergeometric polynomials are
presented. They are examples of methods called the holonomic gradient methods
(HGMs) \cite{nakayama2011,ohara2015,takayamaweb}. Roughly speaking,
the difference HGM demands less computational cost, while the recurrence
relation gives more accurate estimates. The performance of these methods are
compared in applications to evaluating specific $A$-hypergeometric polynomials.
If $n-k$ is large, no method is feasible and asymptotic approximations are
inevitable instead. The accuracy of known asymptotic form and that obtained by
the method developed by Takayama et al. \cite{takayama2015} are compared.

\section{Partial Bell polynomials as $A$-hypergeometric polynomials}
\label{sect:Bell}

  In this section, we will see that the unique polynomial solution of
  the $A$-hypergeometric system associated with the rational normal curve is
  a constant multiple of the associated partial Bell polynomials. The standard
  monomials for the left ideal of the $A$-hypergeometric system will be
  presented. They are useful for evaluating the $A$-hypergeometric polynomials.

Consider a partition of positive integer $n$ with $k$ positive integers:
$n=n_1+\cdots+n_k$. Here $\{n_1,...,n_k\}$ is a multiset. Support of
the p.m.f. can be represented by the set of multiplicities
$s_j:=|\{i:n_i=j\}|$, $j\in\{1,...,n\}$, that is,
\begin{equation}
  {\mathcal S}_{n,k}:=
  \left\{(s_1,...,s_n): \sum_{i=1}^n is_i=n, \sum_{i=1}^n s_i=k\right\}.
  \label{set:part}
\end{equation}
This count vector $(s_1,...,s_n)$ is the main concern of this paper. Let us
call it the size index, following a terminology introduced by Sibuya
\cite{sibuya1993}. The partial Bell polynomials are defined on the support
(\ref{set:part}) with a sequence of non-negative numbers $w_1,w_2,...$
\cite{comtet1974}:
\begin{equation}
  B_{n,k}(w):=n!\sum_{s\in {\mathcal S}_{n,k}}\prod_{i=1}^n
  \left(\frac{w_i}{i!}\right)^{s_i}\frac{1}{s_i!},\qquad n\ge k,
  \label{def:Bell}
\end{equation}
with the convention $B_{0,k}(w_\cdot)=\delta_{0,k}$. The author has defined
associated versions of the partial Bell polynomials \cite{mano2017}. They are
generalizations of the partial Bell polynomials and come from setting
restrictions on the support. The associated partial Bell polynomials are
partial Bell polynomials with some terms of the non-negative sequence set to
zero. Defining the associated versions is useful for the following discussion.

\begin{definition}[\cite{mano2017}]
  \label{defin:aBell}
  Consider a partial Bell polynomial $B_{n,k}(w)$ that is defined by
  an infinite sequence of non-negative numbers $w_1, w_2,...$. The associated
  partial Bell polynomials are defined as follows.
  \begin{eqnarray}
    B_{n,k,(r)}(w)&:=&n!\sum_{s\in{\mathcal S}_{n,k,(r)}}
    \prod_{i=1}^n\left(\frac{w_i}{i!}\right)^{s_i}\frac{1}{s_i!},
    \qquad n\ge rk,
    \label{def:aBell_d}\\
    B_{n,k}^{(r)}(w)&:=&n!\sum_{s\in{\mathcal S}_{n,k}^{(r)}}
    \prod_{i=1}^n\left(\frac{w_i}{i!}\right)^{s_i}\frac{1}{s_i!},
    \qquad k\le n\le rk,
    \label{def:aBell_u}
  \end{eqnarray}
  with the conventions $B_{n,k,(r)}(w)=0$, $n<rk$, $B^{(r)}_{n,k}(w)=0$,
  $n<k$, $n>rk$, and $B^{(r)}_{n,k}(w)=B_{n,k}(w)$, $n\le r+k-1$.
  The supports ${\mathcal S}_{n,k,(r)}$ and ${\mathcal S}_{n,k}^{(r)}$ are
  defined as
  \begin{align*}
    {\mathcal S}_{n,k,(r)}&:=
    \left\{(s_1,...,s_n): \sum_{i=r}^n is_i=n, \sum_{i=r}^n s_i=k\right\}\\
    {\mathcal S}_{n,k}^{(r)}&:=
    \left\{(s_1,...,s_n): \sum_{i=1}^r is_i=n, \sum_{i=1}^r s_i=k\right\}.
  \end{align*}
\end{definition}

The associated partial Bell polynomials (\ref{def:aBell_d}) and
(\ref{def:aBell_u}) are represented by another partial Bell polynomial or as
a linear combination of other partial Bell polynomials \cite{mano2017}. For
later discussion, we present the following fact, which was not presented in
\cite{mano2017}.

\begin{proposition}
  The associated partial Bell polynomial $(\ref{def:aBell_d})$ can be
  represented by the following partial Bell polynomial:
  \begin{equation}
    B_{n,k,(r)}(w)
    =[n]_{(r-1)k}B_{n-(r-1)k,k}
    \left(\frac{w_{\cdot+r-1}}{(\cdot+1)_{r-1}}\right),
    \qquad n\ge rk.
    \label{ide:aBell_d}
  \end{equation}
  Here, symbols for factorials $(x)_i:=x(x+1)\cdots(x+i-1)$ and
  $[x]_i:=x(x-1)\cdots(x-i+1)$ are used.
  In addition, $w_{\cdot+r-1}/(\cdot+1)_{r-1}$ means that
  the sequence $w_1,w_2,...$ in the definition of the partial Bell
  polynomial $(\ref{def:Bell})$ is replaced with $w_{i+r-1}/(i+1)_{r-1}$,
  $i\ge 1$.
\end{proposition}

\begin{proof}
  Take $(r-1)$ elements for each cluster. Then the total number of remaining
  elements is $n-(r-1)k$. The cluster sizes of the partition of the remaining
  elements into $k$ clusters are free from restrictions. Denoting
  $s_j=t_{j-r+1}$ in (\ref{def:aBell_d}), we have
  \begin{equation*}
    \frac{B_{n,k,(r)}(w)}{n!}
    =\sum_{t\in {\mathcal S}_{n-(r-1)k,k}}
    \prod_{i=r}^n
    \left(\frac{w_i}{i!}\right)^{t_{i-r+1}}\frac{1}{t_{i-r+1}!}
    =\sum_{t\in {\mathcal S}_{n-(r-1)k,k}}
    \prod_{i=1}^{n-r+1}
    \left(\frac{w_{i+r-1}}{(i+r-1)!}\right)^{s_i}\frac{1}{s_i!},
  \end{equation*}
  which is the assertion.
\end{proof}

The associated partial Bell polynomials satisfy the following recurrence
relation that comes from the enumerative combinatorial structure of
the partial Bell polynomials.

\begin{proposition}[\cite{mano2017}]
  \label{prop:aBell_u_rec}
  The partial Bell polynomials and the associated partial Bell polynomials
  $(\ref{def:aBell_u})$ satisfy  
  \begin{eqnarray*}
    B_{n+1,k}^{(r)}(w)
    =\sum_{i=0\vee(n-rk+r)}^{(r-1)\wedge(n-k+1)}
    \left(\begin{array}{c}n\\i\end{array}\right)w_{i+1}
    B_{n-i,k-1}^{(r)}(w), \qquad k\le n+1\le rk
  \end{eqnarray*}
  with $B_{i,0}^{(r)}(w)=\delta_{i,0}$, $i\in{\mathbb N}:=\{0,1,2,...\}$.
  Here, $a\vee b:=\max\{a,b\}$ and $a\wedge b:=\min\{a,b\}$. 
\end{proposition}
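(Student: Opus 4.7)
The plan is to prove this recurrence combinatorially by conditioning on the block containing a distinguished element. First I would interpret $B_{n,k}^{(r)}(w)$ as a weighted count of set partitions of $[n]$ into $k$ blocks, each of size in $\{1,\ldots,r\}$, with every block $B$ contributing the weight $w_{|B|}$. This follows from the multinomial identity $n!/\prod_i (i!)^{s_i} s_i!$ for the number of set partitions of $[n]$ with prescribed size index $s$, so that
\[
B_{n,k}^{(r)}(w)=\sum_{\pi}\prod_{B\in\pi}w_{|B|},
\]
where $\pi$ ranges over set partitions of $[n]$ with $k$ blocks of sizes in $\{1,\ldots,r\}$.

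Next, I would classify each such partition $\pi$ of $[n+1]$ according to the block $B_{\star}$ containing the element $n+1$. If $|B_{\star}|=i+1$, then the other $i$ members of $B_{\star}$ are chosen from $[n]$ in $\binom{n}{i}$ ways, the block contributes the factor $w_{i+1}$, and removing $B_{\star}$ leaves a set partition of the remaining $n-i$ elements into $k-1$ blocks of sizes in $\{1,\ldots,r\}$, contributing $B_{n-i,k-1}^{(r)}(w)$ to the weighted enumeration. Summing over $i$ therefore yields
\[
B_{n+1,k}^{(r)}(w)=\sum_{i}\binom{n}{i}w_{i+1}B_{n-i,k-1}^{(r)}(w).
\]

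Finally, I would determine the precise range of $i$. The size constraint $1\le|B_{\star}|\le r$ gives $0\le i\le r-1$, while nonvanishing of $B_{n-i,k-1}^{(r)}(w)$ requires $k-1\le n-i\le r(k-1)$, i.e.\ $n-rk+r\le i\le n-k+1$. Intersecting these two intervals produces the stated bounds $0\vee(n-rk+r)\le i\le(r-1)\wedge(n-k+1)$. The argument is elementary; the only step demanding care is the bookkeeping of these four linear inequalities, which is the mild obstacle in an otherwise routine combinatorial proof. (Alternatively, one could differentiate the exponential generating function $\sum_{n} B_{n,k}^{(r)}(w)x^n/n! = (k!)^{-1}\bigl(\sum_{i=1}^r w_i x^i/i!\bigr)^k$ with respect to $x$ and extract the coefficient of $x^n/n!$, recovering the same identity analytically.)
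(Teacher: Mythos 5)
Your proof is correct. The combinatorial interpretation of $B_{n,k}^{(r)}(w)$ as the weighted enumerator $\sum_{\pi}\prod_{B\in\pi}w_{|B|}$ over set partitions of $[n]$ into $k$ blocks of sizes in $\{1,\dots,r\}$ is the standard one (via the multinomial count $n!/\prod_i (i!)^{s_i}s_i!$ of set partitions with a given size index), conditioning on the block containing the element $n+1$ gives exactly the claimed convolution, and your bookkeeping of the summation range is right: $0\le i\le r-1$ from the block-size restriction together with $k-1\le n-i\le r(k-1)$ from the support of $B_{n-i,k-1}^{(r)}$ yields precisely $0\vee(n-rk+r)\le i\le (r-1)\wedge(n-k+1)$, and this range is nonempty whenever $k\le n+1\le rk$. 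The convention $B_{i,0}^{(r)}(w)=\delta_{i,0}$ is also consistent with your interpretation, so the base case $k=1$ comes out correctly. Note that the paper itself states this proposition as a citation to an external reference and gives no proof, so there is nothing to compare against; both your bijective argument and the generating-function alternative you sketch (differentiating $\sum_n B_{n,k}^{(r)}(w)x^n/n!=(k!)^{-1}(\sum_{i=1}^r w_ix^i/i!)^k$ in $x$) are standard and complete.
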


The \textit{Weyl algebra} of dimension $m$ is the free associative
$\mathbb{C}$-algebra
\[
  D_m=\mathbb{C}\langle x_1,...,x_m,\partial_1,...,\partial_m\rangle
\]
modulo the commutation rules
\[
  x_ix_j=x_jx_i,\,\, \partial_i\partial_j=\partial_j\partial_i,\,\,
  \partial_ix_j=x_j\partial_i\,\, {\rm for}\, i\neq j,\,\,
  {\rm and}\,\, \partial_ix_i=x_i\partial_i+1.
\]
Let $I$ be a left ideal in $D_m$. It is known that the set of standard
monomials of a Gr\"obner basis of $I$ is a basis of the factor ring
$D_m/I$, which is a vector space of $\mathbb{C}(x_1,...,x_m)$. If $I$
is a zero-dimensional ideal, $D_m/I$ is finite dimensional.
If a holomorphic function $f$ satisfies a system of differential equations
$L\bullet f=0$, $L\in I$, $f$ is called a holonomic function. Gel'fand,
Kapranov, and Zelevinsky \cite{gelfand1990} defined a class of holonomic
functions known as GKZ-hypergeometric functions, which are also referred to
as $A$-hypergeometric functions.

\begin{definition}
  Let $A$ be an integer-valued $d\times m$-matrix of rank $d$, and fix
  a vector $b\in\mathbb{C}^d$. The $A$-hypergeometric system $H_A(b)$ is
  the following system of linear partial differential equations for
  an indeterminate function $f(x)$:
  \begin{align}
    &L_i:=\sum_{j=1}^ma_{ij}\theta_j-b_i,\qquad i\in\{1,...,d\},
     \label{def:A-hyp_1}\\
    &\partial^{c^+}-\partial^{c^-},\qquad
     c^+-c^-\in {\rm ker} A\cap {\mathbb Z}^m,
     \label{def:A-hyp_2}
  \end{align}
  where $c^+_i:=c_i\vee 0$, $c^-_i:=(-c_i)\vee 0$, and
  $\theta_j:=x_j\partial_j$ $($the Euler derivative$)$. We regard $H_A(b)$ as
  a left ideal in the Weyl algebra $D_m$. We call it the $A$-hypergeometric
  ideal. The second group of annihilators generates the toric ideal $I_A$ of
  $A$.
\end{definition}

The series representation of the $A$-hypergeometric function around the origin,
namely
\begin{equation}
  Z_A(b;x):=\sum_{\{c;Ac=b,c\in{\mathbb N}^m\}}\frac{x^c}{c!}, \qquad
  x^c:=\prod_{j=1}^m x_j^{c_j},\qquad c!:=\prod_{j=1}^mc_j!,
  \label{def:A-hyp_pol}
\end{equation}
is called the $A$-hypergeometric polynomial. We set $Z_A(b;x):=0$ if
$b\notin A\cdot {\mathbb N}^m$ as the convention.

For the associated partial Bell polynomial $B_{n,k}^{(r)}(w)$, definition
(\ref{def:aBell_u}) is identical to $n!$ times the $A$-hypergeometric
polynomial with
\begin{equation}
  A=\left(
    \begin{array}{ccccc}
      0&1&2&\cdots&(r-1)\wedge (n-k)\\
      1&1&1&\cdots&1
    \end{array}
  \right),\qquad
  b=\left(\begin{array}{c}n-k\\k\end{array}\right),
 \label{def:A-hyp_Bu}
\end{equation}
and the indeterminants are identified as $x_i=w_i/i!$,
$1\le i\le r\wedge(n-k+1)$. The indeterminants will be parameters in
statistical contexts. For the associated partial Bell polynomial
$B_{n,k,(r)}(w)$, the identity is not evident. However, identity
(\ref{ide:aBell_d}) leads to an expression as the partial Bell polynomial,
which is identical to $n!$ times the $A$-hypergeometric polynomial with
\[
  A=\left(
    \begin{array}{ccccc}
      0&1&2&\cdots&n-kr\\
      1&1&1&\cdots&1    
    \end{array}
    \right),\qquad
  b=\left(\begin{array}{c}n-kr\\k\end{array}\right).
\]
The indeterminants are identified as $x_i=w_{i+r-1}/(i+r-1)!$,
$1\le i\le n-kr+1$.

In general, a homogeneous matrix of two rows generates integer partitions.
Let $0<i_1<i_2<\cdots<i_{m-1}$ be relatively prime integers (the greatest
common divisor is one). Without loss of generality, we may assume
\begin{eqnarray}
  A=\left(
    \begin{array}{ccccc}
      0&i_1&i_2&\cdots&i_{m-1}\\
      1&1&1&\cdots&1
    \end{array}
    \right), \qquad m\ge 3.
  \label{def:A-hyp_As}
\end{eqnarray}
The convex hull of the column vectors is a one-dimensional polytope, whose
volume ${\rm vol}(A)$ is $i_{m-1}$. The toric ideal $I_A$ determines a degree
$i_{m-1}$ monomial curve in the projective space ${\mathbb P}^{m-1}$.
The monomial curve is normal if and only if $i_{m-1}=m-1$. In this case,
the monomial curve is the embedding of ${\mathbb P}^1$ in ${\mathbb P}^{m-1}$
and called the rational normal curve; for background, see, e.g.,
\cite{hartshorne1977}. The indeterminants of the $A$-hypergeometric system are
identified as $x_j=w_j/j!$, $j\in \{1,i_1+1,...,i_{m-1}+1\}$, and the support
is a set of integer partitions
\begin{equation*}
  \left\{
  (s_1,...,s_n):\sum_{j=1}^{b_1+b_2}js_j=b_1+b_2, \sum_{j=1}^{b_1+b_2}
  s_j=b_2, s_l=0, l\notin\{1,i_1+1,...,i_{m-1}+1\}
  \right\},
\end{equation*}
which is not empty if and only if $b\in {\mathbb N}A$, where ${\mathbb N}A$
is the monoid spanned by the column vectors of $A$, which generate a lattice
of ${\mathbb N}^2$. In this paper, we will focus on the $A$-hypergeometric
systems associated with the rational normal curve, because they arise naturally
in statistical applications.

Theories around the $A$-hypergeometric system with a homogeneous matrix $A$
of two rows are well developed \cite{cattani1999,saito2010}. It is
straightforward to see that Lemma~1.3 of \cite{cattani1999} gives the following
fact.

\begin{lemma}
  \label{lemm:conc}
  Let $d=2$. When $b\in{\mathbb N}A$, the unique polynomial solutions of 
  the $A$-hypergeometric system $(\ref{def:A-hyp_1})$ and
  $(\ref{def:A-hyp_2})$ with the matrix $(\ref{def:A-hyp_As})$ are
  constant multiples of the $A$-hypergeometric polynomial
  $(\ref{def:A-hyp_pol})$. In particular, if the system is associated with 
  the rational normal curve, equivalently, if $i_{m-1}=m-1$ in
  $(\ref{def:A-hyp_As})$, the $A$-hypergeometric polynomial
  $(\ref{def:A-hyp_pol})$ is a constant multiple of the associated partial
  Bell polynomial defined by Definition~\ref{defin:aBell}.
\end{lemma}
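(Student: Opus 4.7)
The plan is to assemble this lemma from two cited results already recorded in the excerpt, and then to verify a direct identification of series in the concrete case of the rational normal curve.

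For the first assertion, I would simply invoke the immediately preceding lemma of Cattani--D'Andrea--Dickenstein. That lemma already tells us that for $d=2$ any local holomorphic polynomial solution of $H_A(b)$ must be a Laurent polynomial, and that when $b\in{\mathbb N}A$ the only Laurent polynomial solutions are constant multiples of $Z_A(b;x)$. Since the monomial curve determined by the toric ideal of $(\ref{def:A-hyp_As})$ is the $d=2$ setting in question, the uniqueness statement is immediate. There is really no work to do here beyond citing the previous lemma.

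For the second assertion, the task is to match $Z_A(b;x)$ with $n!$ times the associated partial Bell polynomials of Definition~\ref{defin:aBell} for the rational normal curve case $i_{m-1}=m-1$. I would handle the two versions separately. For $B_{n,k}^{(r)}(w)$, take $A$ and $b$ as in $(\ref{def:A-hyp_Bu})$ with the identification $x_i = w_i/i!$: then the equations $As=b$ become $\sum_{i=1}^{r\wedge(n-k+1)}(i-1)s_i = n-k$ and $\sum_{i=1}^{r\wedge(n-k+1)} s_i = k$, which (after shifting the index) is exactly the defining support ${\mathcal S}_{n,k}^{(r)}$; the monomial $x^s/s!$ in $(\ref{def:A-hyp_pol})$ then coincides termwise with the summand of $(\ref{def:aBell_u})$ divided by $n!$. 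For $B_{n,k,(r)}(w)$ the identification is not direct, but I would apply $(\ref{ide:aBell_d})$ to reduce it to an ordinary partial Bell polynomial in the shifted sequence $w_{\cdot+r-1}/(\cdot+1)_{r-1}$, and then apply the same termwise comparison, now with $A$ and $b$ as in $(\ref{def:A-hyp_Bd})$ and $x_i = w_{i+r-1}/(i+r-1)!$.

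The only thing that needs a moment's care is bookkeeping of indices and factorials: one must check that the factor $(i+r-1)!$ in the shifted indeterminant $x_i = w_{i+r-1}/(i+r-1)!$ together with the prefactor $[n]_{(r-1)k}$ from $(\ref{ide:aBell_d})$ really accounts for all the combinatorial weight needed to match the $n!/s!$ of $(\ref{def:A-hyp_pol})$ with the $n!\prod 1/((i+r-1)!)^{s_i} s_i!$ arising from the definition. This is straightforward but is the one place where an index-off-by-one could creep in. Once this matching is in place, combining with the uniqueness from the first assertion yields the ``constant multiple'' claim for both the rational normal curve statement and its associated versions, completing the proof.
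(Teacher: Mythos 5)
Your proposal is correct and follows essentially the same route as the paper: the paper presents this lemma as a summary of the preceding discussion, namely the Cattani--D'Andrea--Dickenstein result that for $d=2$ and $b\in{\mathbb N}A$ the only Laurent (hence polynomial) solutions are constant multiples of $Z_A(b;x)$, combined with the termwise identification of $(\ref{def:aBell_u})$ with $n!\,Z_A(b;x)$ for the data $(\ref{def:A-hyp_Bu})$ and the reduction of $B_{n,k,(r)}$ via $(\ref{ide:aBell_d})$ to the data $(\ref{def:A-hyp_Bd})$. Your bookkeeping of the shifted indeterminants $x_i=w_{i+r-1}/(i+r-1)!$ matches the paper's identification, so nothing is missing.
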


If $i_{m-1}=m-1$, because ${\mathbb N}A={\mathcal C}$,
${\mathcal C}:=\left\{b\in{\mathbb N}^2:0\le b_1\le b_2(m-1)\right\}$,
the corresponding integer partition exists if and only if $b\in {\mathcal C}$.

The Buchberger algorithm and the elimination theory provide a method for
computing the reduced Gr\"obner basis of the toric ideal $I_A$
(Algorithm 4.5 of \cite{sturmfels1996}).
The minimum fiber Markov basis associated with the toric ideal $I_A$ with
the matrix $(\ref{def:A-hyp_As})$ was obtained by \cite{hara2010}.
In this paper, we will use the following minimal Gr\"obner basis.
It is streightforward to obtain the reduced Gr\"obner basis from a minimal
Markov basis.
Throughout the present paper, we fix the term order as reverse lexicographic
with $\partial_1\succ\partial_2\succ\cdots\succ\partial_{n-k+1}$.
The result is as follows.

\begin{proposition}
  \label{prop:G_A}
  A minimal Gr\"obner basis of the toric ideal $I_A$, where
  the matrix $A$ is of the form $(\ref{def:A-hyp_As})$ with
  $i_{m-1}=m-1\ge 2$, is
  \[
  G_A=
  \{\partial_i\partial_j-\partial_{i+1}\partial_{j-1};
  1\le i<j\le m, i+2\le j\}.
  \]
\end{proposition}

The standard monomials give solution bases of the $A$-hypergeometric system,
and the cardinality is called the holonomic rank. The set of standard monomials
are as follows.

\begin{proposition}
  \label{prop:stand_mono}
  For a matrix $A$ of the form $(\ref{def:A-hyp_As})$ with $i_{m-1}=m-1\ge 2$
  and any vector $b\in{\mathbb C}^2$, the totality of the standard monomials
  of the initial ideal of the $A$-hypergeometric ideal $H_A(b)$ is
  $\{1,\partial_i:3\le i\le m\}$.
\end{proposition}

\begin{proof}
  For the annihilators (\ref{def:A-hyp_1}), we have
  ${\rm in}_{\prec}(L_1)=\partial_2$ and ${\rm in}_{\prec}(L_2)=\partial_1$.
  It follows from Proposition~\ref{prop:G_A} that the initial ideal for
  the minimal Gr\"obner basis of the toric ideal $I_A$ is
  $\langle\partial_i\partial_j:2\le i\le j\le m-1\rangle$.
  Therefore, the totality of the standard monomial of the $A$-hypergeometric
  ideal $H_A(b)$ is a subset of $\{1,\partial_i:3\le i\le m\}$. However, if
  $i_m=m-1$, the holonomic rank is ${\rm rank}(H_A(b))={\rm vol(A)}=m-1$
  (see Theorem~3.7 of \cite{cattani1999} or Theorem~4.2.4 of \cite{saito2010}).
  Therefore, $\{1,\partial_i:3\le i\le m\}$ is the totality of the standard
  monomials.
\end{proof}

Before closing this section, let us see a connection between the exponential
structures in enumerative combinatorics \cite{stanley1999} and
the $A$-hypergeometric system associated with the rational normal curve.
The exponential structure is characterized by the exponential generating
function $P(z)$ of the number of possible structures $p(n)$ satisfying
\[
  P(z)=\exp(W(z)),
\]
where
\[
  P(z):=\sum_{n\ge 0}\frac{p(n)}{n!}z^n, \qquad
  W(z):=\sum_{i\ge 1}\frac{w_i}{i!}z^i,
\]
with the convention $p(0)=1$. Because $B_{n,k}(w)$ is the number of possible
structures whose number of clusters is $k$, we have
$p(n)=\sum_{k=1}^nB_{n,k}(w)$ (the Bell polynomial). Therefore,
\begin{equation}
  B_{n,k}(w)=\frac{n!}{k!}[z^n]\left\{W(z)\right\}^k.
  \label{int_A-hyp}
\end{equation}
By an argument on the de Rham cohomology, the hypergeometric ideal $H_A(b)$
eliminates the $A$-hypergeometric integral \cite{saito2010}. For $d=2$,
the integral is
\[
  \Phi_C(A,b;x):=\frac{1}{2\pi\sqrt{-1}}\int_Cf(z,x)^{b_2}z^{-b_1-1}dz, \qquad
  f(z,x):=\sum_{i=1}^mx_iz^{a_{1i}}.
\]
Taking cycle $C$ belonging to the homology group
$H_1(z\in\mathbb C\backslash\{0\}|f(z,x)\neq 0)$ gives a solution basis
(the inverse is not always true \cite{gelfand1990}). Suppose the matrix $A$ and
vector $b$ are as in (\ref{def:A-hyp_Bu}) with $r=n$. Letting $C$ be a small
cycle around the origin yields the residue of the origin:
\[
\Phi_C(A,b;x)=[z^n]\left\{\sum_{i=1}^{n-k+1}x_iz^i\right\}^k.
\]
Comparing with (\ref{int_A-hyp}) shows that this integral is a constant
multiple of the partial Bell polynomial. Other solution bases do not have such
integral representation, nevertheless, they can be obtained by perturbations of
$b$ (see Example~\ref{exa:k=n-2}).

The exponential structure is a facet of the $A$-hypergeometric system
associated with the rational normal curve. Considering the exponential
structure in the theory of the $A$-hypergeometric system provides us
broader viewpoint than that given by the enumerative combinatorics.
Section~\ref{sect:comp} will show that the framework in terms of
the $A$-hypergeometric system gives us methods for evaluating
the $A$-hypergeometric polynomials other than the method using the recurrence
relation that comes from the enumerative combinatorial structure of partial
Bell polynomials. However, formulations in terms of the $A$-hypergeometric
system sometimes involve unwanted generality. In Section~\ref{sect:comp}, we
will see that properties specific to the $A$-hypergeometric polynomial are
helpful for avoiding difficulties caused by the unwanted generality in
evaluating of the $A$-hypergeometric polynomials.

\section{Samplers for similar tests}
\label{sect:test}

  As the generalized hypergeometric distribution on the contingency table
  with fixed marginal sums, the $A$-hypergeometric distribution appears as
  a conditional distribution of some model with given sufficient statistics.
  For such a case, a similar test can be conducted with the conditional
  distribution \cite{lehmann2005} with the aid of samplers from the conditional
  distribution. Constructing a sampler with algebraic constraints has been one
  of the motivating problems in algebraic statistics to date
  \cite{diaconis1998a} (recent developments in this line of research can be
  found in \cite{aoki2012}). In this section, we will discuss the computational
  aspects of samplers for the $A$-hypergeometric distribution. An exact
  sampling algorithm is proposed for general (any number of rows)
  $A$-hypergeometric distributions. Then an application to
  the $A$-hypergeometric distribution associated with the rational normal
  curve is presented.

Suppose we have a model whose conditional distribution given the sufficient
statistics is the $A$-hypergeometric distribution, namely
\begin{equation}
  q(c;x)=\frac{1}{Z_A(b;x)}\frac{x^c}{c!}, \qquad x\in \mathbb{R}_{>0}^m,
  \label{def:A_dist_sim}
\end{equation}
where $A$ is an integer valued $d\times m$-matrix of rank $d$,
$b\in\mathbb{C}^d$, and $c\in\mathbb{N}^m$ is a count vector of $m$
categories with $c_1+\cdots+c_m=k$. For a similar test of hypothesis
$H_0:x=x_0$, consider using the probability function $q(c;x)$ as the test
statistic. The significance probability of the observation $c_{obs}$ is
\begin{equation}
  {\mathbb P}(q(C;x_0)<q(c_{obs};x_0)),
  \label{def:sig_prob}
\end{equation}
where $C$ follows the $A$-hypergeometric distribution with parameter $x_0$.
To estimate the significance probability, we need an unbiased sampler from
the $A$-hypergeometric distribution.

In a Markov chain Monte Carlo (MCMC), the state space of the irreducible
Markov chain is represented as a $b$-fiber: ${\mathcal F}_b(A):=\{c:Ac=b\}$.
The set ${\mathcal M}(A)={\rm Ker}(A)\cap {\mathbb Z}^n$ is called the moves of
$A$. Consider the decomposition of ${\mathcal F}_b(A)$ into equivalence classes
induced by the connectivity with respect to
${\mathcal B}\subset{\mathcal M}(A)$. If ${\mathcal F}_b(A)$ forms one
equivalence class for all $b$, ${\mathcal B}$ is called a Markov basis
\cite{diaconis1998a}. The move has one-to-one correspondence to the binomial
ideal of polynomial ring ${\mathbb C}[x]$: $z\mapsto x^{z^+}-x^{z^-}$.

In uses of an MCMC sampler, we must assess the convergence of the chain to
the target distribution to guarantee that a sample is taken from the target
distribution. However, such assessment is not always easy.
In contrast to MCMC, the following algorithm can sample from the target
distribution exactly. The cost we must pay is to evaluate
the $A$-hypergeometric polynomials. This type of algorithm was proposed for
a test that appeared in genetics and that involves an exchangeable partition
probability function \cite{stewart1977}. However, the following algorithm can
apply to general $A$-hypergeometric distributions.

\begin{algorithm}\label{alg:exact}\rm
  A count vector $c$ with $c_1+\cdots+c_m=k$ is sampled from
  the $A$-hypergeometric distribution $(\ref{def:A_dist_sim})$ by
  the following steps. Let $I_i\in\{1,...,m\}$ be the indicator of
  the category of the $i$-th observation of the sample of size $k$,
  where $c_i=|\{j:I_j=i\}|$.
  
  \begin{itemize}

  \item[(1)]
    $I_1\sim {\mathbb P}(I_1=j)$;

  \item[(2)] For $l=2,...,k$,
    $I_l\sim {\mathbb P}(I_l=j|i_1,i_2,...,i_{l-1})$.
    
  \end{itemize}
  Here, if $b\ge a_{i_1}+\cdots+a_{i_{l-1}}+a_{i_j}$,
  \[
    {\mathbb P}(I_l=j|i_1,...,i_{l-1})=
    \frac{Z_A(b-a_{i_1}-\cdots-a_{i_{l-1}}-a_j;x)}
         {Z_A(b-a_{i_1}-\cdots-a_{i_{l-1}};x)}\frac{x_j}{k-l+1},
   \]
   else ${\mathbb P}(I_l=j|i_1,...,i_{l-1})=0$, where $a_i$ is the $i$-th
   column vector of the matrix $A$.
\end{algorithm}

Let us consider an application of Algorithm~\ref{alg:exact} to
the $A$-hypergeometric distribution associated with the rational normal curve.
The p.m.f. is
\begin{equation}
  q_{n,k}(s;x)=\frac{1}{Z_A((n-k,k)^\top;x)}\frac{x^s}{s!}, \qquad
  x\in\mathbb{R}^{r\wedge(n-k+1)}_{>0},
  \label{def:A_dist_sim2}
\end{equation}
where the matrix $A$ is given in (\ref{def:A-hyp_Bu}).
Now the count vector $c$ is the size index $s$, and we put $m=r\wedge(n-k+1)$.
We assume $m\ge 3$ and $k\ge 2$, since otherwise the sampling is trivial.
A Markov basis and the Metropolis-Hastings ratio are as follows.

\begin{proposition}
  \label{prop:MCMC}
  For the toric ideal $I_A$, where the matrix $A$ is of the form
  $(\ref{def:A-hyp_As})$ with $i_{m-1}=m-1\ge 2$, a Markov basis is
  \[
    {\mathcal B}=\{e_i+e_j-e_{i+1}-e_{j-1}; 1\le i<j\le m, 
    i+2\le j\}.
  \]
  For $j>i+2$, the Metropolis-Hastings ratio for the move from state $s$
  to state $s+\epsilon z$ is
  \[
    \frac{q_{n,k}(s+\epsilon z;x)}{q_{n,k}(s;x)}
    =\left\{
    \frac{x_i}{x_{i+1}}
    \frac{x_j}{x_{j-1}}\right\}^\epsilon\\
    \times
    \left\{
    \begin{array}{ll}
    s_{i+1}s_{j-1}\{(s_i+1)(s_j+1)\}^{-1}, &\epsilon=+1,\\ 
    s_is_j\{(s_{i+1}+1)(s_{j-1}+1)\}^{-1}, &\epsilon=-1,
    \end{array}\right.
  \]
  where $z=e_i+e_j-e_{i+1}-e_{j-1}$. For $j=i+2$,
  \[
    \frac{q_{n,k}(s+\epsilon z;x)}{q_{n,k}(s;x)}
    =\left\{\frac{x_ix_{i+2}}{x_{i+1}^2}\right\}^\epsilon\\
    \times
    \left\{
    \begin{array}{ll}
    s_{i+1}(s_{i+1}-1)\{(s_i+1)(s_{i+2}+1)\}^{-1}, &\epsilon=+1,\\ 
    s_is_{i+2}\{(s_{i+1}+2)(s_{i+1}+1)\}^{-1}, &\epsilon=-1,
    \end{array}\right.
  \]
  where $z=e_i+e_{i+2}-2e_{i+1}$.
\end{proposition}

\begin{proof}
  By virtue of Theorem~3.1 of \cite{diaconis1998a} the minimal
  Gr\"obner basis $G_A$ of the toric ideal $I_A$, which is given by
  Proposition~\ref{prop:G_A} while replacing $\partial_i$ by $x_i$, is
  a Markov basis. The Metropolis-Hastings ratio follows by a simple
  calculation.
\end{proof}

\begin{example}\rm
  The data set considered is from \cite{diaconis1998b} and concerns a goodness
  of fit of a regression model to effect of an insecticide. Consider
  the univariate Poisson regression with $m$ levels of a covariate. The means
  that $\mu_i$, $i\in\{1,...,m\}$ of independent Poisson random variables
  $S_i$ were modeled as $\log\mu_i=\alpha+\beta i$. The sufficient statistics
  are the sample size $k=\sum_{i=1}^m s_i$ and the sum of the levels
  $n=\sum_{i=1}^m is_i$. The conditional distribution given the sufficient
  statistics is the $A$-hypergeometric distribution (\ref{def:A_dist_sim2})
  with $r=m$ and $x_i=1/i!$, $i\ge 1.$ A chemical to control insects is
  sprayed on successive equally infested plots in increasing concentrations
  $1,2,3,4,5$ (in some units). After the spraying the number of insects left
  alive on the plots are $(s_1,s_2,s_3,s_4,s_5)=(44,25,21,19,11)$. $k=120$ and
  $n=288$. The similar test tells us how well the model fit to the data.
  An estimate of the significance probability of
  the $\chi^2$-statistic based on $900,000$ samples from the exact sampler
  (Algorithm~\ref{alg:exact}) was $0.0258$. To evaluate the $A$-hypergeometric
  polynomials, the recurrence relation in Proposition~\ref{prop:aBell_u_rec}
  was employed. This should be close to the true value. For the MCMC,
  an estimate based on a walk of $90,000$ steps (with the initial $10,000$
  steps having been discarded to avoid sampling from the un-converged part of
  the chain, as was done in \cite{diaconis1998b}) was $0.0231$. We can say
  that the MCMC sampling scheme gives a reasonable estimate. This can be
  confirmed with the histograms shown
  in Figure 1. The histogram obtained by the MCMC sampler is fairly close to
  that obtained by the exact sampler.
\end{example}

\section{Exchangeable partition probability functions}
\label{sect:exc}

  Chapter 1 of \cite{pitman2006} is an extensive survey of the relationship
  between the partial Bell polynomials and the exchangeable partition
  probability functions (EPPFs). A typical application of EPPFs is in
  Bayesian statistics. For a multinomial sampling from a prior distribution,
  the marginal likelihood of a sample is an EPPF (see
  Example~\ref{exa:dir_multi}). In the context of Bayesian nonparametrics,
  a prior process characterized by an EPPF is called a species sampling prior
  \cite{hjort2010,lee2013}. In this section, we will see that the conditional
  distribution of a general class of EPPFs is the $A$-hypergeometric
  distribution associated with the rational normal curve.

Label each observation of a sample of size $n$ with a positive integer, and
consider a probability law on partitions of the set $\{1,2,...,n\}$. If we
assume exchangeability, then cluster sizes are our concern. Hence, we consider
a probability law on a set of integers whose sum is a positive integer $n$.
Following Aldous \cite{aldous1985}, let us call such a probability law
a random partition. We say that a random partition $\Pi_n$ is exchangeable if
a symmetric function $p_n$ on a set of partitions of an integer $n$ satisfies
\[
  {\mathbb P}(\Pi_n=\{A_1,...,A_k\})=p_n(|A_1|,...,|A_k|)
\]
for a partition of $\{1,2,...,n\}$ to be arbitrary $k$ clusters
$\{A_1,...,A_k\}$. This p.m.f. $p_n$ is called an EPPF.

Let us consider a class of EPPFs that have a multiplicative form, namely
\[
  p_n(n_1,...,n_k)=v_{n,k}\prod_{i=1}^k w_{n_i}. \qquad
  n=n_1+\cdots+n_k
\]
The support is given by partitions of a fixed positive integer $n$ with $k$
positive integers. The parameters are two sequences of positive numbers
$(v_{n,k})$ and $(w_i)$, $1\le i,k\le n$. This EPPF is an example of
multiplicative measures, which were studied by Vershik \cite{vershik1996} as
a model of statistical mechanics. Here, a cluster of size $i$ has $w_i$
different microscopic structures. In terms of the size index, we have
\begin{equation}
  {\mathbb P}(S=s)=v_{n,k}n!\frac{x^s}{s!}, \qquad s\in{\mathcal S}_{n,k},
  \label{def:Gibbs}
\end{equation}
where $x_i=w_i/i!$, $i\in\{1,...,n-k+1\}$ and the support ${\mathcal S}_{n,k}$
is given in (\ref{set:part}). The number of clusters $|\Pi_n|$ is
the sufficient statistic for $v$ and is distributed as
${\mathbb P}(|\Pi_n|=k)=v_{n,k}B_{n,k}(w)$, where $B_{n,k}(w)$ is the partial
Bell polynomial. The conditional distribution is
\begin{equation}
  {\mathbb P}(S=s|\Pi_n|=k)=\frac{n!}{B_{n,k}(w)}\frac{x^s}{s!}, \qquad
  x\in{\mathbb R}^{n-k+1}_{>0}.
  \label{def:mGibbs}
\end{equation}
In \cite{pitman2006,vershik1996}, this  p.m.f. was referred to as
the microcanonical Gibbs distribution. If we consider logarithms of $x$ are
natural parameters, this is an exponential family. Moreover, this is
the $A$-hypergeometric distribution, since the partial Bell polynomial is $n!$
times the $A$-hypergeometric polynomial associated with the rational normal
curve, where the matrix $A$ and vector $b$ are given in (\ref{def:A-hyp_Bu})
with $r=n$. Let $Z_{n,k}(x)\equiv Z_A((n-k,k)^\top;x)=B_{n,k}(w)/n!$.

\begin{lemma}\label{lem:suffic}
  For the distribution $(\ref{def:Gibbs})$ with two sequences of positive
  numbers $v$ and $x$, the number of clusters is a sufficient and complete
  statistic for the parameter $v$.
\end{lemma}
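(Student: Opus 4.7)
The plan is to handle the two claims separately, using only the form of the likelihood $(\ref{def:Gibbs})$ together with the marginal $(\ref{dist:K})$.

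For sufficiency, I would apply the Neyman--Fisher factorization theorem. In $(\ref{def:Gibbs})$ the parameter $v$ enters only through the scalar $v_{n,k}$, which depends on the observed size index $s$ solely via the number of clusters $k = k(s) = \sum_i s_i$, while the remaining factor $n!\, x^s/s!$ is a function of $s$ and of the known $x$ only. Hence the factorization criterion holds and $K = \sum_i s_i$ is sufficient for $v$.

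For completeness, I would use $(\ref{dist:K})$, which gives the marginal distribution of $K$ on its support $\{1,\dots,n\}$ by $\pi_k := v_{n,k}\, n!\, Z_{n,k}(x)$. Since $x \in \mathbb{R}^{n-k+1}_{>0}$ and $\mathcal{S}_{n,k}$ is non-empty for every $k \in [n]$, the series $(\ref{def:A-hyp_pol})$ defining $Z_{n,k}(x)$ is a sum of strictly positive terms, so $Z_{n,k}(x) > 0$. Consequently, as $v = (v_{n,k})_{k=1}^{n}$ varies over the positive sequences satisfying the normalization $\sum_{k} v_{n,k}\, n!\, Z_{n,k}(x) = 1$, the induced vector $(\pi_1,\dots,\pi_n)$ sweeps out the full relative interior of the probability simplex on $\{1,\dots,n\}$. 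If $\mathbb{E}_v[g(K)] = \sum_k g(k)\, \pi_k = 0$ for every admissible $v$, then the linear form $\pi \mapsto \sum_k g(k)\, \pi_k$ vanishes on a non-empty open subset of the simplex, and therefore identically, forcing $g(k) = 0$ for each $k \in \{1,\dots,n\}$. Hence $K$ is complete.

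The argument presents no real obstacle: sufficiency is a one-line factorization, and completeness is a standard open-simplex density argument. The only point demanding care is verifying the positivity $Z_{n,k}(x) > 0$ for every admissible $k$, which is what ensures that the marginal family of $K$ actually covers the open simplex rather than some proper sub-variety of it; without this, one could only conclude that $g$ vanishes on those $k$ attained by $K$ with positive probability.
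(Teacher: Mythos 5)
Your proof is correct, and both halves line up with the paper's strategy: sufficiency by the factorization theorem, completeness by showing the marginal law of $K$ is rich enough on $[n]$. The one real difference is in how that richness is used. The paper picks the degenerate parameter $v_{n,k}=\delta_{k,k_0}\bigl(n!\,Z_{n,k_0}(x)\bigr)^{-1}$, i.e.\ the vertex of the simplex putting all mass on $k_0$, and reads off $f(k_0)=0$ directly; you instead observe that as $v$ ranges over positive normalized sequences the vector $(\pi_1,\dots,\pi_n)$ fills the relative interior of the simplex, and then kill the linear form $\sum_k g(k)\pi_k$ by linearity/continuity. Your route is marginally longer but has the virtue of staying inside the stated hypothesis that $v$ is a sequence of \emph{positive} numbers, which the paper's vertex choice technically violates (it sets $v_{n,k}=0$ for $k\neq k_0$); the paper's route is shorter but implicitly enlarges the parameter space to the closed simplex. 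You are also right to flag $Z_{n,k}(x)>0$ for all $k\in[n]$ as the one point needing verification — it is exactly what guarantees the map $v\mapsto\pi$ is onto the open simplex rather than a face of it, and it holds here because ${\mathcal S}_{n,k}\neq\emptyset$ for every $k\in[n]$ and $x\in{\mathbb R}_{>0}^{n-k+1}$.
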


\begin{proof}
  Sufficiency is obvious by the factorization theorem. For completeness,
  assume for a function $f(\cdot)$ that the number of clusters $|\Pi_n|$
  satisfies ${\mathbb E}(f(|\Pi_n|))=0$ for arbitrary $v$. Choose arbitrary
  $k_0$ in $\{1,...,n\}$ and fix the parameter as
  $v_{n,k}=\delta_{k,k_0}(Z_{n,k_0}(w))^{-1}$. Then we have
  ${\mathbb E}(f(|\Pi_n|))=f(k_0)=0$. This implies $f(k)\equiv 0$,
  $\forall k$, which is completeness.
\end{proof}

The following proposition is a generalization of Theorem 2.5 of
\cite{keener1978}.

\begin{proposition}
  For the distribution $(\ref{def:Gibbs})$ with two sequences of positive
  numbers $v$ and known $x$, the unique minimum variance unbiased estimator
  $(UMVUE)$ of moments of the joint factorial moments of the size index is
  \[
    {\mathbb E}
    \left[\prod_{i=1}^n[S_i]_{r_i}||\Pi_n|=k\right]=
    \frac{Z_{n-i_1r_1-\cdots-i_nr_n,k-r_1-\cdots-r_n}(x)}
         {Z_{n,k}(x)}x^rI_{\{n-k\ge(i_1-1)r_1+\cdots+(i_n-1)r_n\}}.
  \]
\end{proposition}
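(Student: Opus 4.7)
The plan is to split the assertion into two independent steps: first, an algebraic identity for the conditional expectation obtained by direct computation, and second, an appeal to the Lehmann--Scheff\'e theorem. The key observation, already noted before Lemma~\ref{lem:suffic}, is that the conditional distribution of the size index given $|\Pi_n|=k$ under the multiplicative measure $(\ref{def:Gibbs})$ is precisely the microcanonical Gibbs distribution $(\ref{def:mGibbs})$; in particular this conditional law depends only on $x$ and $k$, not on the unknown sequence $v$.

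For the first step, starting from $(\ref{def:mGibbs})$ one writes the conditional expectation as a sum over $\mathcal{S}_{n,k}$ and uses $[s_i]_{r_i}/s_i! = 1/(s_i-r_i)!$ when $s_i\ge r_i$ (and $0$ otherwise). The change of variable $t_i := s_i - r_i$ then converts the constraints $\sum_i i s_i = n$ and $\sum_i s_i = k$ into $\sum_i i t_i = n - \sum_j j r_j$ and $\sum_i t_i = k - \sum_j r_j$. Factoring out $\prod_i x_i^{r_i} = x^r$ and recognizing the remaining sum as $Z_{n',k'}(x)$ with $n' = n - \sum_j j r_j$ and $k' = k - \sum_j r_j$, one arrives at the right-hand side of $(\ref{mom})$. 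The indicator $I_{\{n-k\ge\sum_j(j-1)r_j\}}$, together with the implicit requirement $k\ge\sum_j r_j$, records non-emptiness of $\mathcal{S}_{n',k'}$; both cases of failure are uniformly handled by the convention $Z_A(b;x)=0$ for $b\notin A\cdot\mathbb{N}^m$.

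For the second step, consider the family $(\ref{def:Gibbs})$ with known $x$ and unknown parameter sequence $v$. The statistic $\prod_i [S_i]_{r_i}$ is trivially unbiased for its own expectation under this family, so its Rao--Blackwellization $\mathbb{E}[\prod_i [S_i]_{r_i}\mid|\Pi_n|]$ is an unbiased estimator that depends on $|\Pi_n|$ alone (consistent with the fact, established in the first step, that the right-hand side of $(\ref{mom})$ depends only on $x$ and $k$, not on $v$). By Lemma~\ref{lem:suffic}, $|\Pi_n|$ is a complete sufficient statistic for $v$, so Lehmann--Scheff\'e identifies this Rao--Blackwellization as the unique UMVUE.

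The direct computation itself is routine symbol-pushing; the only mild care required is reconciling the explicit indicator in the statement with the vanishing convention for $Z$. The conceptual input that elevates the combinatorial identity to a UMVUE statement is entirely carried by the completeness of $|\Pi_n|$ from Lemma~\ref{lem:suffic}, in parallel with the argument underlying Theorem~2.5 of \cite{keener1978}.
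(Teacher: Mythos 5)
Your proposal is correct and follows essentially the same route as the paper: a direct computation of the conditional moments via the shift $s_i\mapsto s_i-r_i$, which identifies the sum as $Z_{n',k'}(x)$ times $x^r$, followed by an appeal to the Lehmann--Scheff\'e theorem using the completeness and sufficiency of $|\Pi_n|$ from Lemma~\ref{lem:suffic}. Your treatment of the indicator and the explicit Rao--Blackwellization step is slightly more careful than the paper's, but the argument is the same.
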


\begin{proof}
  The conditional distribution (\ref{def:mGibbs}) yields
  \begin{eqnarray*}
    {\mathbb E}([S_i]_{r_i}||\Pi_n|=k)
    &=&\frac{1}{Z_{n,k}(x)}
    \sum_{s\in {\mathcal S}_{n,k}}\frac{x^s}{s!}[s_i]_{r_i}
    =\frac{1}{Z_{n,k}(x)}\sum_{s'\in {\mathcal S}_{n-ir_i,k-r_i}}
    \frac{x^{s'}}{s'!}x_i^{r_i}\\
    &=&\frac{Z_{n-ir_i,k-r_i}(x)}{Z_{n,k}(x)}x_i^{r_i},
  \end{eqnarray*}
  where the multiplicity vector $s'$ is $s'_j=s_j$, $j\neq i$ and
  $s'_i=s_i-r_i$. The joint moments are derived in the same manner.
  The Lehmann-Scheff\'e theorem \cite{lehmann1998} gives the assertion.
\end{proof}

The sequences $v$ and $x$ may be parametrized by a few parameters.
An important parametrization of $x$ is
\begin{equation}
  x_i=\frac{(1-\alpha)_{i-1}}{i!}, \qquad i =1,2,..., \qquad
  -\infty<\alpha<1.
  \label{def:egibbs}
\end{equation}
Gnedin and Pitman \cite{gnedin2005} showed that an EPPF has infinite
exchangeability if and only if $x$ has this parametrization. Such
a multiplicative measure is called the Gibbs random partition.
The Gibbs random partition characterizes an important class of prior processes
in Bayesian nonparametrics \cite{hjort2010}. The Gibbs random partition is
the marginal likelihood of a sample taken from the prior process (see
Example~\ref{exa:dir_multi}). The two-parameter Dirichlet process, which is
also called the Pitman-Yor process \cite{pitman1997,ishwaran2003}, is
a popular prior process in Bayesian nonparametrics \cite{hjort2010}.
The Pitman random partition \cite{pitman1995} is a member of the Gibbs random
partitions, which is the marginal likelihood for the two-parameter Dirichlet
process.
For nonzero $\alpha$ the partial Bell polynomial has the form
\[
B_{n,k}(w)=n!Z_{n,k}(x)=\frac{(-1)^n}{(-\alpha)^k}C(n,k;\alpha).
\]
Here, $C(n,k;\alpha)$ is the generalized factorial coefficient, which
satisfies
\[
  \sum_{k=0}^nC(n,k;\alpha)[x]_k=[\alpha x]_n.
\]
For $\alpha=0$, the partial Bell polynomial is the unsigned Stirling number
of the first kind.

\begin{example}\rm
  \label{exa:dir_multi}
  Estimating the number of unseen species is an intriguing classical problem
  (recent progress can be found in, for example, \cite{lijoi2007,sibuya2014}).
  An empirical Bayes approach is as follows \cite{keener1978}. Suppose
  the frequencies of species in a population follow the $m$-variate symmetric
  Dirichlet distribution of parameter $(-\alpha)$, $\alpha<0$, where the total
  number of species is $m$. For the multinomial sampling of size $n$ with 
  the number of individuals of the $i$-th species is $n_i$, the marginal
  likelihood becomes
  \[
    p_n(n_1,...,n_m)=\left(
      \begin{array}{c}
        m\alpha\\n
      \end{array}
    \right)^{-1}
    \prod_{i=1}^m
    \left(
      \begin{array}{c}
        \alpha\\n_i
      \end{array}
    \right).
  \]
  Here, $k:=|\{i:n_i>0\}|$ is the number of observed species. This EPPF is
  the Dirichlet-multinomial or the negative hypergeometric distribution. In
  terms of the size index, we have
  \[
    \mathbb{P}(S=s)
    =\frac{[m]_k(-\alpha)^k}{(-m\alpha)_n}n!\frac{x^s}{s!}, \qquad
    x_i=\frac{(1-\alpha)_{i-1}}{i!}, \qquad i\ge 1.
  \]
  As this expression shows, the Dirichlet-multinomial distribution is
  an example of a Gibbs random partition. The number of observed species $k$ is
  the sufficient statistic of the total number of species $m$. If the parameter
  $\alpha$ is known, the UMVUE of $m$ is $\hat{m}(k)=k-\alpha^{-1}Z_{n,k-1}((1-\alpha)_{\cdot-1}/\cdot!)/Z_{n,k}((1-\alpha)_{\cdot-1}/\cdot!)$, where
  $(1-\alpha)_{\cdot-1}/\cdot!$ represents the sequence
  $x_i=(1-\alpha)_{i-1}/i!$, $i\ge 1$. Applications to some data sets can be
  found in \cite{keener1978}.
\end{example}

\section{Maximum Likelihood Estimation}
\label{sect:MLE}

In this section, we will discuss the maximum likelihood estimation of
the $A$-hypergeometric distribution associated with the rational normal curve.
Takayama, et. al~\cite{takayama2015} gave a framework for the general
$A$-hypergeometric distributions, while this section presents some results on
the $A$-hypergeometric distribution associated with the rational normal curve.
The main tools employed here are the same as those employed in
\cite{takayama2015}, but more detailed analyses are possible thanks to
specific properties of the $A$-hypergeometric system associated with
the rational normal curve, such as the relationship with the partition
polytopes. The information geometry of the Newton polytope of
the $A$-hypergeometric polynomial plays important roles throughout this
section. The p.m.f. is an algebraic exponential family. The maximum likelihood
estimation of the full and curved exponential families is discussed.
Gradient-based methods to evaluate the maximum likelihood estimator (MLE) will
be discussed. An application to a problem associated with an EPPF that appears
in an empirical Bayes approach is then presented.

Let us consider a particular $A$-hypergeometric distribution associated with
the rational normal curve, whose p.m.f. is
\begin{equation}
  q_{n,k}(s;x)=\frac{1}{Z_A((n-k,k)^\top;x)}\frac{x^s}{s!}, \qquad
  x\in{\mathbb R}^{n-k+1}_{>0},
  \label{def:A_dist_MLE}
\end{equation}
where the matrix $A$ is given in (\ref{def:A-hyp_Bu}) with $r=n\ge k+2\ge 4$
and the support is (\ref{set:part}). With this setting the $A$-hypergeometric
polynomial is $1/n!$ times the partial Bell polynomial. Although this
setting makes the discussion model-specific, the model covers important
statistical applications. As in the previous section, let
$Z_{n,k}(x)\equiv Z_A((n-k,k)^\top;x)$. The p.m.f. is the exponential family
and the log likelihood is
\[
  \ell_{n,k}(s;\xi):=\xi^is_i-\psi_{n,k}(\xi), \qquad
  \xi^i=\log x_i\in {\mathbb R},
  \qquad i\in\{1,...,n-k+1\},
\]
where $\psi_{n,k}(\xi):=\log Z_{n,k}(e^\xi)$ is the potential, and a constant
is omitted. Here and in the following Einstein's summation convention will be
used; indices denoted by a repeated letter, where the one appears as
a superscript while the other appears as a subscript, are summed up.
The superscripts should not be confused with a power. The p.m.f.
(\ref{def:A_dist_MLE}) is regular because the natural parameter space
$\{\xi: Z_{n,k}(e^\xi)<\infty\}$ is $\mathbb{R}^{n-k+1}$
\cite{brown1986,barndorff-nielsen2014}. Moreover, the likelihood is
an algebraic exponential family as defined by Drton and Sullivant
\cite{drton2007}, since the moments of the constraints in (\ref{set:part}):
$\eta_1+\cdots+\eta_{n-k+1}-k=0$ and
$\eta_1+2\eta_2+\cdots+(n-k+1)\eta_{n-k+1}-n=0$ are algebraic (polynomial)
constraints.

Under a transformation of the indeterminants the $A$-hypergeometric polynomial
transforms as
\[
Z_A(b;s_1^{\cdot-1}s_2 x_\cdot)=s_1^{n-k}s_2^kZ_A(b;x).
\]
This transformation is known as the torus action, namely
\begin{equation}
  x_i\mapsto x_is^{a_i}, \qquad i\in\{1,...,n-k+1\},
  \label{def:torus}
\end{equation}
where $a_i$ is the $i$-th column vector of the matrix $A$. This is a known
property of partial Bell polynomials~\cite{comtet1974}. Following Takayama et
al. \cite{takayama2015}, let us introduce the generalized odds
ratio to parametrize the quotient space
${\mathbb R}^{n-k+1}_{>0}/{\rm Im} A^\top$. The Gale transform of $A$, which
will be denoted as $\bar{A}$, satisfies $\bar{A}A^\top=0$. The explicit forms
of the row vectors are
\begin{equation*}
  \bar{a}_i=ie_1-(i+1)e_2+e_{i+2}, \qquad i\in \{1,...,n-k-1\},
\end{equation*}
where $e_i$ is the $(n-k+1)$-dimensional unit vector whose $i$-th component
is unity. The Gale transformation provides the generalized odds ratios, namely
\begin{equation}
  y_i:=x^{\bar{a}_j}=\frac{x_1^ix_{i+2}}{x_2^{i+1}}, \qquad
  i\in\{1,...,n-k-1\}.
  \label{def:odds}
\end{equation}

The moment map is invariant under the torus action. It can be seen that
the moment map
${\mathbb E}(S):{\mathbb R}^{n-k+1}/{\rm Im}A^\top \ni \log y\mapsto \eta$
provides the dual ($\eta$-) coordinate system in the sense of information
geometry. The dual coordinate and the Fisher metric are immediately given as 
\[
  \eta_i:=\theta_i\psi_{n,k}={\mathbb E}[S_i]
  =\frac{Z_{n-i,k-1}(y)}{Z_{n,k}(y)}y_{i-2}
\]
and 
\begin{equation}
  g_{ij}:=\partial_i\partial_j\psi_{n,k}={\rm Cov}[S_i,S_j]
  =\frac{Z_{n-i-j,k-2}(y)}{Z_{n,k}(y)}y_{i-2}y_{j-2}
    I_{\{n-k+2\ge i+j\}}-\eta_i\eta_j+\eta_i\delta_{i,j},
  \label{def:metric}
\end{equation}
respectively, where $\partial_i:=\partial/\partial\xi^i=\theta_i$.
Here, the torus action with $s_1=x_2^{-1}x_1$ and $s_2=x_1^{-1}$ in
(\ref{def:torus}) is used such that the vector $y$ becomes
$(1,1,y_1,...,y_{n-k-1})$. $y_{-1}=y_0=1$. Because
of the dually flatness, an exponential family is e-flat and also
m-flat~\cite{amari2000}.

Because the gradient of the log likelihood is
$\partial_i\ell_{n,k}(s;\xi)=s_i-\eta_i$, finding the MLE is equivalent to
finding the inverse image of the moment map
\begin{equation}
  {\mathbb E}(S) : {\mathbb R}^{n-k+1}/{\rm Im}A^\top\to
  {\rm relint}({\rm New}(Z_{n,k})),
  \label{def:mom_map}
\end{equation}
where the Newton polytope ${\rm New}(Z_{n,k})$ is the convex hull of
the support ${\mathcal S_{n,k}}$ given in (\ref{set:part}). The following
theorem comes from the fact that a size index never enters the right-hand side
of (\ref{def:mom_map}).

\begin{theorem}
  \label{thm:MLE_exist}
  For the likelihood given by the $A$-hypergeometric distribution associated
  with the rational curve $(\ref{def:A_dist_MLE})$, the MLE does not exist with
  probability one.
\end{theorem}

\begin{remark}\rm
  This assertion might seem curious, but we have an analogy in the theory of
  exponential families as follows. If the sample size is one, MLE of the beta
  distribution and that of the gamma distribution do not exist with probability
  one. This is because the sufficient statistics are on the boundary of
  the parameter space (see Example 5.6 in \cite{brown1986} and Example 9.8 in
  \cite{barndorff-nielsen2014}). For the $A$-hypergeometric distribution,
  the size index, a count vector of multiplicities, can be regarded as
  a multivariate sample of size one. A similar argument appeared in the context
  of algebraic statistics on a hierarchical log-linear models
  \cite{eriksson2006}.
\end{remark}

In the following discussion, the partition polytope is useful. Denote the set
of possible partitions of positive integer $n$ by
${\mathcal S}_n:=\cup_{i=1}^{n}{\mathcal S}_{n,i}$. The convex hull of
${\mathcal S}_n$ is called the partition polytope $P_n$, which was discussed
by \cite{shlyk2005}. The partition polytope has an important property, namely
that $P_n$ is a \textit{pyramid} with the apex $e_{n}$. In other words, all
vertices are on the faces of $P_n$ because the $n$-th coordinate of the apex
is 1 and that of the other vertices are zero.

\begin{proof}
  Because the p.m.f. (\ref{def:A_dist_MLE}) is regular, the MLE exists if and
  only if the sufficient statistics are in the interior of the convex hull of
  the support (Theorem 5.5 of \cite{brown1986} and Corollary 9.6 of
  \cite{barndorff-nielsen2014}). The condition is
  $s\in{\rm relint}({\rm New}(Z_{n,k}))$. If $n\ge k\ge n/2$, there is
  a one-to-one affine map between the vertices in ${\mathcal S}_{n-k}$ and
  those in ${\mathcal S}_{n,k}$:
  \begin{equation}
    {\mathcal S}_{n-k}\ni (s_1,...,s_{n-k},0)
    \mapsto\left(k-\sum_{i=1}^{n-k}s_i,s_1,...,s_{n-k}\right)
    \in {\mathcal S}_{n,k}.
    \label{map:SnSnk}
  \end{equation}
  The map is easily confirmed with Young tableau, a collection of boxes
  arranged in left-justified boxes, with the row length in non-increasing
  order. Listing the number of boxes in each row gives a partition.  The affine
  map (\ref{map:SnSnk}) means that if we discard the rightmost column, we have
  a partition in $\mathcal{S}_{n-k}$. Because all vertices of
  $\mathcal{S}_{n-k}$ are on the faces of $P_{n-k}$, all vertices of
  $\mathcal{S}_{n,k}$ are on the faces of $({\rm New}(Z_{n,k}))$, so
  $\forall s\notin{\rm relint}({\rm New}(Z_{n,k}))$. For $2\le k<n/2$,
  the modified map
  \[
    \tilde{{\mathcal S}}_{n-k}\ni
    \left\{(s_1,...,s_{n-k},0):\sum_{i=1}^{n-k}s_i\le k\right\}
    \mapsto\left(k-\sum_{i=1}^{n-k}s_i,s_1,...,s_{n-k}\right)
    \in {\mathcal S}_{n,k}
  \]
  is one-to-one, where $\tilde{{\mathcal S}}_{n-k}$ is a collection of
  all integer partitions of $n-k$ with $\sum_{i=1}^{n-k}s_i\le k$. It can
  be shown that all vertices of $\tilde{\mathcal{S}}_{n-k}$ are on the faces
  of $P_{n-k}$ and $\forall s\notin{\rm relint}({\rm New}(Z_{n,k}))$.
\end{proof}
  
\begin{remark}\rm
  The fact that a size index never enters ${\rm relint}({\rm New}(Z_{n,k}))$
  can be seen as an observation of the integer partition: the number of
  clusters whose sizes are equal to or greater than $(n-k)/2+1$ is at most
  one. In particular, we have the vertex $e_{n-k+1}+(k-1)e_1$ and other
  vertices are $0$ in the $(n-k+1)$-th coordinate. This implies that every
  vertex of ${\mathcal S}_{n,k}$ is on the boundary of ${\rm New}(Z_{n,k})$.
\end{remark}

\begin{example}\rm
  \label{exa:k=n-2_moment}
  When $n=k+2\ge 4$ the Newton polytope is the finite open interval between
  the two possible observations $(n-4,2,0)^\top$ or $(n-3,0,1)^\top$.
  The image of the moment map is
  \begin{equation*}
    \left(\begin{array}{c}
      \eta_1\\\eta_2\\\eta_3
    \end{array}\right)=
    \left(\begin{array}{c}
      n-4\\2\\0
    \end{array}\right)
    +\left(1+\frac{n-3}{2y_1}\right)^{-1}
    \left(\begin{array}{c}
      1\\-2\\1
    \end{array}\right), \qquad y_1\in{\mathbb R}_{>0},
  \end{equation*}
  where $y_1$ is the generalized odds ratio defined in (\ref{def:odds}).
  If the observation is $(n-3,0,1)$, the likelihood is
  $(1+(n-3)/(2y_1))^{-1}$. If the observation is $(n-4,2,0)$, the likelihood is
  $(1+2y_1/(n-3))^{-1}$. The MLE does not exist for both of the cases.
\end{example}

Theorem~\ref{thm:MLE_exist} forces us to consider a sample consisting of
multiple size indices, or multiple count vectors. Assume we have an i.i.d.
sample of size $N$ and denote the multiple size indices as
$\{s^{(1)},s^{(2)},...,s^{(N)}\}$. Note that we have two sample sizes: $n$ and
$N$. The log likelihood is
\begin{equation}
  N\ell_{n,k}(s^{(\cdot)};\xi)
  =N\left\{\xi^j\bar{s}_j-\psi_{n,k}(\xi)\right\},
  \qquad \bar{s}_j:=\frac{1}{N}\sum_{i=1}^N s_j^{(i)}.
  \label{def:like_full}
\end{equation}
The MLE may exist for the sample of size $N\ge 2$, because the sample $\bar{s}$
can enter the relative interior of the Newton polytope. For the moment map
(\ref{def:mom_map}), Takayama et al.~\cite{takayama2015} established
the following theorem.

\begin{theorem}[\cite{takayama2015}]
  \label{thm:New}
  Let $A$ be a $d\times m$ homogeneous matrix with non-negative integer
  entries. If the affine dimension of the Newton polytope ${\rm New}(Z_A(b))$
  is $m-d$, then the image of the moment map $(\ref{def:mom_map})$ agrees
  with the relative interior of the Newton polytope. Moreover, the moment map
  is one-to-one.
\end{theorem}

\begin{corollary}
  \label{cor:New_B}
  For the $A$-hypergeometric distribution associated with the rational normal
  curve $(\ref{def:A_dist_MLE})$, the image of the moment map
  $(\ref{def:mom_map})$ agrees with the relative interior of the Newton
  polytope ${\rm New}(Z_{n,k})$. Moreover, the moment map is one-to-one.
\end{corollary}

Let us prepare the following lemma.

\begin{lemma}
  \label{lem:NewZnk}
  The affine dimension of the Newton polytope ${\rm New}(Z_{n,k})$ is
  $n-k-1$ for $n\ge k+2\ge 5$.
\end{lemma}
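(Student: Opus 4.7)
The plan is to bracket $\dim \mathrm{New}(Z_{n,k})$ between $n-k-1$ from above and below. The upper bound is immediate: $\mathcal{S}_{n,k}$ lies in the affine subspace $\{s \in \mathbb{R}^{n-k+1} : As = (n-k, k)^\top\}$, which has dimension $(n-k+1) - \mathrm{rank}(A) = n-k-1$ since $b = (n-k,k)^\top \in \mathbb{N}A$ makes the system consistent.

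For the matching lower bound I would exhibit $n-k-1$ linearly independent vectors in the difference set $\mathcal{S}_{n,k} - \mathcal{S}_{n,k} \subset \ker A$. For each $j = 1, \ldots, n-k-1$ define $z^{(j)} := e_1 + e_{j+2} - e_2 - e_{j+1} \in \ker A$; these are instances of the Markov-type moves appearing in the Markov basis displayed in Section~\ref{sect:test}. They are linearly independent by a triangular argument: the topmost nonzero coordinate of $z^{(j)}$ is $j+2$, which grows strictly with $j$.

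The crux is to realize each $z^{(j)}$ as $s - s'$ for some $s, s' \in \mathcal{S}_{n,k}$. For $j \ge 2$ I would take $s'$ with size-index entries $s'_1 = k-3$, $s'_2 = 1$, $s'_{j+1} = 1$, and $s'_{n-k-j} = 1$ (entries at coinciding indices being summed). A direct check gives $\sum_i s'_i = k$ and $\sum_i i s'_i = n$, so $s' \in \mathcal{S}_{n,k}$, and since $s'_2, s'_{j+1} \ge 1$ the image $s := s' + z^{(j)}$ remains non-negative and therefore lies in $\mathcal{S}_{n,k}$ as well. For $j = 1$, where $z^{(1)} = e_1 - 2e_2 + e_3$ requires $s'_2 \ge 2$, I would take $s'_1 = k-3$, $s'_2 = 2$, $s'_{n-k-1} = 1$ (again with merging when $n-k-1 \in \{1, 2\}$).

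The main obstacle will be the bookkeeping for the degenerate cases where the prescribed part sizes collide (e.g.\ $j+1 = n-k-j$, or $n-k-j \in \{1, 2\}$); the crucial inequalities $s'_2 \ge 1$ and $s'_{j+1} \ge 1$ remain valid under all such mergers, so realizability is preserved. The hypotheses $k \ge 3$ and $n-k \ge 2$ provided by $n \ge k+2 \ge 5$ supply enough room for these constructions, and combining with the upper bound yields $\dim \mathrm{New}(Z_{n,k}) = n-k-1$.
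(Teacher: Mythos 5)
Your proof is correct, and it takes a genuinely different route from the paper. The paper first maps $\mathcal{S}_{n,k}$ affinely onto (a subset of) the partition polytope $P_{n-k}$ by deleting one element from each block, then invokes Shlyk's result that $\dim P_{n-k}=n-k-1$; since that affine bijection only hits partitions of $n-k$ into at most $k$ parts, the paper must split into the cases $k\ge n/2$ (where the map is onto) and $3\le k<n/2$ (where it separately exhibits a basis of the direction space of $P_{n-k}$ consisting of partitions with at most $3$ parts, so that the restricted map still captures full dimension). You instead stay entirely inside the fiber $\{As=b\}$: the upper bound $n-k-1$ is the codimension count for the rank-$2$ matrix $A$, and for the lower bound you realize the $n-k-1$ independent kernel vectors $z^{(j)}=e_1+e_{j+2}-e_2-e_{j+1}$ as differences of explicit elements of $\mathcal{S}_{n,k}$; your witness partitions use at most three non-unit parts, which is exactly where the hypothesis $k\ge 3$ enters (playing the same role as the ``at most $3$ parts'' basis in the paper), and the triangularity in the top index $j+2$ gives independence. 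I checked the witnesses: for $j\ge 2$ the point with $s'_1=k-3$, $s'_2=s'_{j+1}=s'_{n-k-j}=1$ satisfies both linear constraints, all index collisions only increase entries, and $s'_2,s'_{j+1}\ge1$ guarantees $s'+z^{(j)}\ge 0$; the $j=1$ case with $s'_2=2$ works likewise, including the boundary case $n=k+2$. Your argument is self-contained (no appeal to Shlyk's lemma), avoids the case split on $k$ versus $n/2$, and ties in naturally with the Markov basis of Section~\ref{sect:test}; what it gives up is the geometric identification with the partition polytope $P_{n-k}$, which the paper reuses in the surrounding remarks (e.g.\ the pyramid structure and vertex counts).
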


\begin{proof}
  When $n\ge k\ge n/2$, the one-to-one affine map (\ref{map:SnSnk}) implies
  that the affine dimension of the Newton polytope equals the affine dimension
  of the partition polytope $P_{n-k}$, which is $n-k-1$ by Theorem 1 in
  \cite{shlyk2005}. If $3\le k<n/2$ it is sufficient to establish that there
  exists a basis of the vector space of size index $s\in{\mathcal S}_m$,
  $m\ge 2$, which consists of vertices satisfying $\sum_{j=1}^m s_j\le 3$. In
  fact, this is true. If $m$ is even, a basis is given by
  $\{e_m,e_i+e_{m-i},2e_j+e_{m-2j}:1\le i\le(m-1)/2,1\le j \le m/2-1\}$; if
  $m$ is odd, a basis is given by
  $\{e_m,e_i+e_{m-i},2e_j+e_{m-2j}:1\le i,j\le (m-1)/2\}$.
\end{proof}

\begin{proof}[Proof of Corollary~\ref{cor:New_B}]
  If $k\ge 3$, according to Lemma~\ref{lem:NewZnk} the affine dimension of
  ${\rm New}(Z_{n,k})$ is $n-k-1$ and the condition of Theorem~\ref{thm:New}
  is satisfied. For $k=2$, the affine dimension of the Newton polytope is
  $\lfloor n/2\rfloor-1$ and Theorem~\ref{thm:New} does not work for $n\ge 5$.
  But we can prove the assertion directly as follows. If $n$ is even,
  the $A$-hypergeometric polynomial is
  $Z_{n,2}=\sum_{j=1}^{n/2-1}x_jx_{n-j}+x_{n/2}^2/2$. It can be seen that
  the Newton polytope is a pair of two simplices, one is the convex hull of
  $\{e_1,e_2,...,e_{n/2}\}$ and the other is that of
  $\{e_n,e_{n-1},...,e_{n/2}\}$. The moment map is $\eta_1=\eta_n=y_{n-3}/z$,
  $\eta_2=\eta_{n-1}=y_{n-4}/z$, $\eta_j=y_jy_{n-4-j}/z$, $3\le j\le n/2-1$
  and $\eta_n=y^2_{n/2-2}/(2z)$, where
  $z:=y_{n-3}+y_{n-4}+\sum_{j=1}^{n/2-3}y_j y_{n-4-j}+y^2_{n/2-2}/2$ and
  $y\in {\mathbb R}_{>0}^n$. This is the pair of simplices:
  $\eta_1+\eta_2+\cdots+\eta_{n/2}/2=1$ and
  $\eta_n+\eta_{n-1}+\cdots+\eta_{n/2}/2=1$ with $\eta \in {\mathbb R}_{>0}^n$.
  The one-to-one map is obvious from the explicit expressions for
  the simplices. A similar argument gives the assertion for the case of $n$
  being odd.  
\end{proof}

\begin{example}\rm
  \label{exa:k=n-2_MLE}
  This is a continuation of Example~\ref{exa:k=n-2_moment}. The image of the
  moment map agrees with the relative interior of the Newton polytope, and
  the one-to-one map is obvious. For a sample of size $N\ge 2$, let the counts
  of $(n-3,0,1)^\top$ and $(n-4,2,0)^\top$ be $N_1$ and $N_2=N-N_1$,
  respectively. Here, $N_1\sim Binom.(N,\eta_3(y_1))$. The probability that
  the MLE of the generalized odds ratio $y_1$ exist is
  $1-\eta_3^N-(1-\eta_3)^N$.
  If the MLE of $\eta$ exists, it is
  \begin{equation*}
    \bar{s}=\left(\frac{(n-3)N_1+(n-4)N_2}{N},\frac{2N_2}{N},\frac{N_1}{N}
    \right).
  \end{equation*}
  It can be seen that $\bar{s}$ is efficient. For example,
  ${\rm Var}(\bar{s}_3)=g_{33}(\xi)/N=\eta_3(1-\eta_3)/N$. The MLE of
  the generalized odds ratio $y_1$ is the unique solution of
  $\eta(y_1)=\bar{s}$, and we have $\hat{y}_1=N_1/N_2\times(n-3)/2$.
  In the limit $N\to\infty$ the MLE is consistent and the asymptotic variance
  is
  \begin{equation*}
    {\rm Var}(\log\hat{y}_1)\sim\frac{\{y_1+(n-3)/2\}^2}{Ny_1(n-3)/2}.
  \end{equation*}
  It is remarkable that the asymptotic variance increases linearly with
  sample size $n$ for large $n$.
\end{example}

It is reasonable to consider the situation in which the indeterminants $x$ are
parametrized by a few parameters, namely, a curved exponential family.
The parametrization changes the MLE dramatically; the MLE may exist even for
a sample of size $N=1$. Let $M$ be a submanifold of the Newton polytope
${\rm New}(Z_{n,k})$, where the curved exponential family is defined on it. Let
the coordinate system of $M$ be $u^a$, $a\in\{1,...,m\}$ with $m\le n-k$. We
will use the dual coordinate system to represent a point in
${\rm New}(Z_{n,k})$; a point in $M$ is parametrized as $\eta(u)$. An estimator
is a mapping from ${\rm New}(Z_{n,k})$ to $M$:
\[
  f:{\rm New}(Z_{n,k})\to M, \qquad \bar{s}\mapsto\hat{u}=f(\bar{s}).
\]
Let us call the inverse of the estimator $A(u)=f^{-1}(u)$
the estimating manifold corresponding to the point $u\in M$.
Let us prepare a new coordinate system of ${\rm New}(Z_{n,k})$ around
$\eta(u)$: A point $\eta$ is indexed by $(u,v)$, where $v$ is the index of
$\eta$ in $A(u)$, where $\eta(u)=\eta(u,0)$. The tangent space of $M$ is
spanned by $\partial_a$, while the tangent space of $A(u)$ is spanned by
$\partial_\kappa$, $m+1\le\kappa\le n-k+1$. The following theorem is
fundamental.

\begin{theorem}[\cite{amari2000}]
  \label{thm:MLE_IG}
  For a curved exponential family with submanifold $M$, an estimator $\hat{u}$
  is consistent if and only if the estimating submanifold $A$ contains point
  $\eta(u)$ as $N\to\infty$. The asymptotic covariance matrix of the estimator
  satisfies
  $\lim_{N\to\infty}N{\mathbb E}[(\hat{u}^a-u^a)(\hat{u}^b-u^b)]=\bar{g}^{ab}$,
  where $\bar{g}^{ab}:=(g_{ab}-g_{a\kappa}g^{\kappa\lambda}g_{b\lambda})^{-1}$.
  The estimator is first-order asymptotically efficient if and only if $A(u)$
  and $M$ are orthogonal.
\end{theorem}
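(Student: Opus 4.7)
The plan is to split the theorem into three parts --- consistency, the explicit asymptotic covariance, and the efficiency condition --- and handle each by combining a standard probabilistic limit theorem with local analysis in the adapted $(u^a,v^\kappa)$ coordinates around $\eta(u_0)$, where $u_0$ denotes the true parameter.

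For consistency, I would apply the strong law of large numbers to the i.i.d.\ sample. Under $u_0$, the sample mean satisfies $\bar{s}\to {\mathbb E}_{u_0}[S]=\eta(u_0)$ almost surely, so by continuity of the estimator, $\hat{u}=f(\bar{s})\to f(\eta(u_0))$ almost surely. Since $f^{-1}(u)=A(u)$, the condition $\hat{u}\to u_0$ is equivalent to $\eta(u_0)\in A(u_0)$, which is the ``if and only if'' statement for consistency.

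For the asymptotic covariance, I would invoke the central limit theorem for i.i.d.\ vectors: $\sqrt{N}(\bar{s}-\eta(u_0))$ is asymptotically Gaussian with covariance equal to the Fisher metric $g$ evaluated at $u_0$. In the adapted chart the estimator $f$ becomes the projection $(u,v)\mapsto u$, so the delta method transports the CLT through $f$, and the asymptotic covariance of $\sqrt{N}(\hat{u}-u_0)$ is read off from the $(u,u)$-block of the covariance matrix pulled back to the $(u,v)$ chart. Using the block decomposition of the Fisher metric into indices tangent to $M$ (labels $a,b$) versus tangent to $A(u_0)$ (labels $\kappa,\lambda$), the Schur-complement identity for block-matrix inverses yields $\bar{g}^{ab}=(g_{ab}-g_{a\kappa}g^{\kappa\lambda}g_{b\lambda})^{-1}$ as claimed. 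For the efficiency claim, the Cram\'er--Rao bound restricted to $M$ gives $(g_{ab})^{-1}$ as the smallest achievable asymptotic covariance. Because the Schur complement $g_{ab}-g_{a\kappa}g^{\kappa\lambda}g_{b\lambda}$ is $\le g_{ab}$ in the Loewner order, one has $\bar{g}^{ab}\ge (g_{ab})^{-1}$ with equality if and only if $g_{a\kappa}=0$ at $\eta(u_0)$. Since $g_{a\kappa}=\langle\partial_a,\partial_\kappa\rangle$ is precisely the Fisher inner product between directions tangent to $M$ and those tangent to $A(u_0)$, its vanishing is the orthogonality condition in the statement.

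The main obstacle will be bookkeeping for the change of coordinates. The Fisher metric has natural expressions in both the $\xi$-coordinates, where it equals the Hessian of $\psi_{n,k}$, and the $\eta$-coordinates, where it appears as the inverse matrix; the adapted $(u,v)$ is yet a third chart obtained by splitting the tangent space at $\eta(u_0)$ into the tangent spaces of $M$ and of $A(u_0)$. Care is needed to apply the block decomposition used in the Schur complement to the correct pullback metric and to keep the indices $a,b,\kappa,\lambda$ covariant throughout. Once this accounting is straight, the Schur-complement identity and the Loewner comparison are mechanical, and the three claims fall into place simultaneously.
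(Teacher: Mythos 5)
The paper does not prove this statement: it is imported verbatim from Amari--Nagaoka \cite{amari2000} and used as a black box, so there is no internal proof to compare against. Your sketch is the standard textbook argument for exactly this theorem and it is sound: SLLN plus continuity of $f$ for consistency (with $f(\eta(u_0))=u_0$ iff $\eta(u_0)\in A(u_0)$), the CLT and delta method in the adapted $(u,v)$ chart for the covariance, and the Loewner comparison of the Schur complement with $g_{ab}$ for the efficiency criterion, with equality forcing $g_{a\kappa}=0$ because $g^{\kappa\lambda}$ is positive definite. The one bookkeeping point worth making explicit, which you gesture at but should nail down, is why the $(u,u)$-block of the delta-method covariance equals the $(a,b)$-block of the \emph{inverse} Fisher information in the $(u,v)$ chart: this holds because in a full exponential family $\bar{s}$ is exactly efficient for $\eta$ (its covariance is $G/N$ with $G$ the Hessian of $\psi$, i.e.\ the inverse of the Fisher information in $\eta$-coordinates), so the pulled-back covariance is $B^{-1}GB^{-\top}$ while the pulled-back information is $B^{\top}G^{-1}B$, and these are mutually inverse; the block-inverse formula then delivers $(g_{ab}-g_{a\kappa}g^{\kappa\lambda}g_{b\lambda})^{-1}$. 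You should also state the standing regularity assumption that the ancillary submanifolds $A(u)$ foliate a neighborhood of $\eta(u_0)$ so that $(u,v)$ is a genuine chart and $f$ is differentiable there; without that the delta-method step has no meaning.
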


The following corollary characterizes the MLE for the curved exponential
family.

\begin{corollary}
  \label{cor:MLE_exist_curve}
  For the log likelihood $(\ref{def:like_full})$, suppose the indeterminants
  are parametrized by a parameter $u$ such that they are restricted to
  a submanifold $M$ of the Newton polytope ${\rm New}(Z_{n,k})$.

  \begin{enumerate}

  \item[(1)] If a boundary of closure of $M$ is empty, an MLE exists. In
    particular, if $M$ is convex in the dual coordinate system, it is unique.

  \item[(2)] If a boundary of closure of $M$ is not empty, an MLE exists if and
    only if an orthogonal projection of $\bar{s}$ to $M$ is possible in terms
    of the Fisher metric $(\ref{def:metric})$.

  \item[(3)] If MLEs exist, they are consistent and first order asymptotically
    efficient as $N\to\infty$. The asymptotic covariance matrix is given by
    $g_{ab}^{-1}/N$.

  \end{enumerate}
  
\end{corollary}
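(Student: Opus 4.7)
The plan is to reduce the corollary to the critical-point equation of the log likelihood and then invoke Theorem~\ref{thm:MLE_IG}. Writing $N\ell_{n,k}(\bar{s};\xi(u)) = N\{\xi^i(u)\bar{s}_i - \psi_{n,k}(\xi(u))\}$ and using $\partial_i\psi_{n,k}=\eta_i$, the gradient in the $u$-coordinates is
\begin{equation*}
  \partial_a \ell = (\partial_a \xi^i)\,(\bar{s}_i - \eta_i(u)),\qquad a\in[m].
\end{equation*}
Hence the MLE equations read $(\partial_a\xi^i)(\bar{s}_i - \eta_i(\hat{u})) = 0$, which says the residual $\bar{s}-\eta(\hat{u})$ is $g$-orthogonal to $T_{\eta(\hat{u})}M$. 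Equivalently, in the language used just above the corollary, $\bar{s}$ lies on the estimating submanifold $A(\hat{u})$ that intersects $M$ e-orthogonally at $\eta(\hat{u})$.

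For Part~(1), the assumption that $\overline{M}$ has empty boundary means $\overline{M}$ is a compact submanifold contained in $\mathrm{relint}({\rm New}(Z_{n,k}))$; note that by Theorem~\ref{thm:New_B} the latter is exactly the image of the moment map, so $\ell$ is continuous and well-defined there. Continuity on a compact set yields a maximizer, which satisfies the above orthogonality equation, giving existence. Uniqueness when $M$ is convex in the dual $\eta$-coordinate then follows from the strict convexity of $\psi_{n,k}$ in $\xi$ (its Hessian being the positive-definite Fisher metric~(\ref{def:metric})), combined with the Legendre duality between $\xi$ and $\eta$: two distinct critical points would contradict the Pythagorean identity on the dually flat statistical manifold.

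For Part~(2), when $\partial\overline{M}\ne\emptyset$ the supremum of $\ell$ on $M$ may be attained on $\partial\overline{M}$ or not attained at all, so existence of an interior critical point is no longer automatic. A MLE in $M$ exists precisely when the orthogonal projection condition $\bar{s}-\eta(\hat{u})\perp_g T_{\eta(\hat{u})}M$ has a solution $\hat{u}\in M$, which is the stated criterion. Part~(3) is then a direct application of Theorem~\ref{thm:MLE_IG}: the MLE equation forces $A(\hat{u})$ and $M$ to be $g$-orthogonal, so in a basis adapted to $M\oplus A(\hat{u})$ the mixed components $g_{a\kappa}$ vanish and the formula $\bar{g}^{ab}=(g_{ab}-g_{a\kappa}g^{\kappa\lambda}g_{b\lambda})^{-1}$ collapses to $g_{ab}^{-1}$, the inverse of the induced metric on $M$; consistency and first-order efficiency follow from the same theorem.

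The main obstacle I anticipate is the careful topological bookkeeping behind the phrase ``boundary of closure of $M$ is empty''. One has to argue that properness of $\overline{M}$ inside the open polytope is what actually prevents the maximizing sequence from escaping to the relative boundary of ${\rm New}(Z_{n,k})$, where $\|\xi\|\to\infty$ and the likelihood degenerates; conversely, in Part~(2) one must show that failure of the projection to land in $M$ is equivalent to the maximum being approached along $\partial\overline{M}$ rather than attained in the interior of $M$. The information-geometric content itself is essentially a translation of Theorem~\ref{thm:MLE_IG} into the present $A$-hypergeometric setting.
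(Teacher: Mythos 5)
Your argument is correct and follows essentially the same route as the paper: both reduce the MLE equation to the $g$-orthogonal (m-)projection of $\bar{s}$ onto $M$ in the dually flat geometry, treat (1) as the case where the projection cannot fail, obtain uniqueness for convex $M$ from the Pythagorean/projection property (the paper cites the Hilbert projection theorem, you use strict convexity of $\psi_{n,k}$ and Legendre duality, which is the same mechanism), and deduce (3) by applying Theorem~\ref{thm:MLE_IG} with $g_{a\kappa}=0$. The only cosmetic difference is that you derive the projection condition from the explicit gradient $\partial_a\ell=(\partial_a\xi^i)(\bar{s}_i-\eta_i)$ rather than from minimizing the Kullback--Leibler divergence along the m-geodesic.
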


\begin{proof}
  An MLE $\hat{u}$ gives a stationary point $\eta(\hat{u})$ in a manifold $M$
  that minimizes the Kullback-Leibler divergence between a sample $\bar{s}$
  and $M$. Thanks to the dually flatness of the exponential family, this is
  the orthogonal projection of the point $\bar{s}$ onto $M$ along with
  the m-geodesic, which is the straight interval that connects $\bar{s}$
  and $\eta(\hat{u})$ in the dual coordinate system. Assertion (1) is a special
  case. If a boundary of closure of $M$ is empty, the orthogonal projection is
  always possible. If $M$ is convex, by virtue of the Hilbert projection
  theorem, the point $\eta(\hat{u})$ uniquely minimizes the Kullback-Leibler
  divergence over $M$. Therefore, the MLE $\hat{u}$ is unique. For assertion
  (3), the straight interval that connects $\bar{s}$ and $\eta(\hat{u})$ is
  the estimating submanifold $A(u)$ and the MLE $\hat{u}$ is consistent.
  Because $A(u)$ is orthogonal to $M$, the MLE $\hat{u}$ is first order
  efficient. The asymptotic covariance follows from Theorem~\ref{thm:MLE_IG}.
\end{proof}

\begin{remark}\rm
  For assertion (2) of Corollary~\ref{cor:MLE_exist_curve}, the possibility of
  orthogonal projection must be checked case by case. For example, if $M$ is
  convex, the orthogonal projection is possible if and only if $\bar{s}$ is not
  in the union of normal cones of the boundary of the closure of $M$:
  \[
    \bigcup_{\mu\in{\partial{\bar{\mathcal M}}}}\{\eta\in {\rm New}(Z_{n,k}):\forall\mu^*\in{\mathcal M},\langle\eta,\mu-\mu^*\rangle\ge 0\},
  \]
  where the inner product $\langle\cdot,\cdot\rangle$ is in terms of the Fisher
  metric (\ref{def:metric}). If this condition is satisfied the orthogonal
  projection is possible and the MLE $\hat{u}$ exists uniquely.
\end{remark}

As an example of the curved exponential family, let us consider
the $A$-hypergeometric distribution (\ref{def:mGibbs}) which emerges as
the conditional distribution of the Gibbs random partition. The submanifold
$M$ is one-dimensional and parametrized by $\alpha\in(-\infty,1)$.
By the parametrization (\ref{def:egibbs}) the generalized odds ratio becomes
\begin{equation}
  y_i=\frac{2^{i+1}}{(i+2)!}
  \frac{(1-\alpha)_{i+1}}{(1-\alpha)^{i+1}}, \qquad i\in\{1,...,n-k-1\}.
  \label{def:egibbs_y}
\end{equation}
The image of the moment map $M$ is now a smooth open curve in
${\rm relint}({\rm New}(Z_{n,k}))$. One of the limit points as $\alpha\to 1$,
which is called the Fermi-Dirac limit, is $\eta=(k-1)e_1+e_{n-k+1}$.
This is a vertex of ${\rm New}(Z_{n,k})$. Here, the Fisher metric is $0$.
Another limit is $\alpha\to -\infty$, which is called the Maxwell-Boltzmann
limit. A simple calculation shows that the other limit is
\[
  \eta_i=
  \left(\begin{array}{c}n\\i\end{array}\right)\frac{S(n-i,k-1)}{S(n,k)}
\]
and the Fisher metric is
\[
  g_{ij}=
  \left(\begin{array}{c}n\\i,j\end{array}\right)\frac{S(n-i-j,k-2)}{S(n,k)}
  I_{\{n-k+2\ge i+j\}}-\eta_i\eta_j+\eta_i\delta_{i,j},
\]
where $S(n,k)$ denotes the Stirling number of the second kind. The inverse of
the $N$ times the asymptotic variance is
$g_{\alpha\alpha}=\|\partial_\alpha^2\|=g_{ij}\partial_\alpha\xi^i\partial_\alpha\xi^j$,
where $\partial_\alpha\xi^i=\sum_{j=1}^{i-1}(\alpha-j)^{-1}$, $i\ge 2$, and
$\partial_\alpha\xi^1=0$, which is the squared norm of the tangent vector
along with the curve $M$. The squared norm vanishes as $\alpha\to 1$ and
diverges as $\alpha\to-\infty$, which implies that the model is singular at
these limit points. The following examples illustrate the nature of the MLE.

\begin{example}\rm
  \label{exa:k=n-2_a}
  This is a continuation of Example~\ref{exa:k=n-2_MLE}, where $n=k+2\ge 4$.
  The Newton polytope is the finite open interval between $(n-4,2,0)^\top$
  and $(n-3,0,1)^\top$. The submanifold $M$ is a subset of the Newton polytope,
  which is the open interval between the two points
  $((n-3)(3n-8)/(3n-5),6(n-3)/(3n-5),4/(3n-5))^\top$ and $(n-3,0,1)^\top$.
  The former point corresponds to the limit $\alpha\to -\infty$, while
  the latter point corresponds to the limit $\alpha\to 1$. Here, the orthogonal
  projection is the identity map. The MLE does not exist for
  a sample of size $N=1$. For a sample of size $N\ge 2$, if $\bar{s}$ is
  within the interval, which is equivalent to $0<4N_2<3(n-3)N_1$, the MLE
  exists uniquely. The asymptotic variance
  \[
    N^{-1}g^{\alpha\alpha}=
    \frac{1}{N}\left(1+\frac{3(n-3)}{4}\frac{\alpha-1}{\alpha-2}\right)^2
    \left[\frac{2}{(\alpha-1)^2}+\frac{(n-3)}{36}
    \left\{\frac{22\alpha^2-56\alpha+43}{(\alpha-1)(\alpha-2)^3}\right\}
    \right]^{-1}
  \]
  increases linearly with sample size $n$, as for the full exponential
  family discussed in Example~\ref{exa:k=n-2_MLE}.   
\end{example}

\begin{example}\rm
  \label{exa:k=n-3_a}
  When $n=k+3\ge 6$, the dimension of the Newton polytope is two. The Newton
  polytope is the convex hull of the three vertices $(n-6,3,0,0)^\top$,
  $(n-5,1,1,0)^\top$, and $(n-4,0,0,1)^\top$. The image of the moment map is
  \[
    \left(\begin{array}{c}
      \eta_1\\\eta_2\\\eta_3\\\eta_4
    \end{array}\right)=
    \left(\begin{array}{c}
      n-6\\3\\0\\0
    \end{array}\right)
    +\frac{\frac{3!}{n-5}y_1}{1+\frac{3!}{n-5}y_1+\frac{3!}{(n-4)(n-5)}y_2}
    \left(\begin{array}{c}
      1\\-2\\1\\0
    \end{array}\right)
    +\frac{\frac{3!}{(n-4)(n-5)}y_2}
      {1+\frac{3!}{n-5}y_1+\frac{3!}{(n-4)(n-5)}y_2}
    \left(\begin{array}{c}
      2\\-3\\0\\1
    \end{array}\right).
  \]
  One of the limit points of the curve $M$ with $\alpha\to 1$ is
  $(n-4,0,0,1)^\top$, while the other limit point with $\alpha\to -\infty$ is
  \begin{equation*}
    \left(\frac{(n-4)^2}{n-2},\frac{(3n-11)(n-4)}{(n-2)(n-3)},
    \frac{4(n-4)}{(n-2)(n-3)},\frac{2}{(n-2)(n-3)}\right)^\top.
  \end{equation*}
  The latter point is in ${\rm relint}({\rm New}(Z_{n,k}))$, but in the limit
  $n\to\infty$ it tends to $(n-6,3,0,0)$, which is a vertex of
  ${\rm New}(Z_{n,k})$. Because the curve $M$ is not convex, it is not
  straightforward to check for the possibility of orthogonal projection.
  However, an analysis of the estimating equation suggests that MLEs do not
  exist for small $n$. The estimating equation can be recast into
  $f(\alpha)=0$, where
  \begin{align*}
    f(\alpha)&:=
      \{-(\bar{s}_3+3\bar{s}_4)n^2+(5\bar{s}_3+15\bar{s}_4+4)n
        -2(3\bar{s}_3+9\bar{s}_4+5)\}\alpha^3\\
      &+\{(5\bar{s}_3+13\bar{s}_4)n^2-(21\bar{s}_3+53\bar{s}_4+24)n
        +4(5\bar{s}_3+12\bar{s}_4+13)\}\alpha^2\\
      &+\{-(7\bar{s}_3+17\bar{s}_4)n^2+(19\bar{s}_3+45\bar{s}_4+44)n
        -2(3\bar{s}_3+7\bar{s}_4+35)\}\alpha\\
      &+(3\bar{s}_3+7\bar{s}_4)n^2-(3\bar{s}_3+7\bar{s}_4+24)n+12.
  \end{align*}
  The problem of MLE existence is interpreted as an elementary analytical
  problem of the existence of the solution of $f(\hat{\alpha})=0$ with
  $\partial_\alpha f(\hat{\alpha})<0$ and $\hat{\alpha}<1$. It can be shown
  that the MLE exists uniquely if and only if the coefficient of $\alpha^3$
  is negative, which is equivalent to the condition
  \begin{equation}
    \bar{s}_3+3\bar{s}_4>\frac{2(2n-5)}{(n-2)(n-3)}.
    \label{cond:proj}
  \end{equation}
  A sketch of the proof is as follows. It is obvious that no MLE exists for
  $\bar{s}=(n-4,0,0,1)^\top$. Let us assume that $\bar{s}\neq(n-4,0,0,1)^\top$.
  Then $f(1)=8(2\bar{s}_4+\bar{s}_3-2)<0$. Because of the nature of a cubic
  curve, it is obvious that the necessary condition of the existence of the
  MLE is that the coefficient of $\alpha^3$ is negative, in which case there
  may be a possibility that two MLEs of the same likelihood exist. Necessary
  conditions for the existence of the two MLEs is $\partial_\alpha f(1)<0$ and
  the solution of $\partial_\alpha f=0$ is smaller than 1. However, we can
  check that the intersection of these conditions for $s$ is empty. Therefore,
  the condition that the coefficient of $\alpha^3$ is negative is sufficient
  for the unique existence of the MLE. Let us view (\ref{cond:proj}) as being
  certainly the condition that determines the possibility for the orthogonal
  projection around $\alpha\to-\infty$. Let
  $B_{\alpha i}:=\partial_\alpha\eta_i(-\infty,0)=g_{ij}\partial_\alpha\xi^j$
  and $B_{\kappa i}:=\partial_\kappa\eta_i(-\infty,0)$,
  where $\partial_\alpha=B_{\alpha i}\partial^i$ and
  $\partial_\kappa=B_{\kappa i}\partial^i$ are the tangent vectors of
  $M$ and $A(-\infty)$ expressed in terms of basis $\{\partial^i\}$,
  respectively. Taking
  $\partial_\kappa=\delta_{\kappa, 2}(\bar{s}_i-\eta_i(u))\partial^i$,
  the condition of possibility for the orthogonal projection is  
  \begin{equation*}
    g_{\alpha 2}
    =\langle\partial_\alpha,\partial_2\rangle=B_{\alpha i}B_{2j}g^{ij}
    =\partial_\alpha\xi^j(\bar{s}_j-\eta_j(-\alpha))> 0,
  \end{equation*}
  which is equivalent to (\ref{cond:proj}). The remarkable difference from
  the case of the full exponential family is that MLE exists even for the case
  of $N=1$. In fact, it can be seen that $s=(n-5,1,1,0)$ with $n\ge 7$ has
  the MLE. If the MLE exists, the asymptotic variance with $N\to\infty$ is
  $g^{\alpha\alpha}/N\sim n(\alpha-1)^3(\alpha-2)/(4N)$ for large $n$.
  The asymptotic variance increases linearly with sample size $n$, as in
  the case of $n=k+2$ in Example~\ref{exa:k=n-2_a}. Figure 2 depicts
  the Newton polytope ${\rm New}(Z_{10,7})$ projected onto
  the $\eta_3$-$\eta_4$ plane, which is the lower triangle of the diagonal,
  and the submanifold $M$ is the curve. The estimating manifold for the case
  of $\bar{s}=(4.8,1.6,0.4,0.2)$ is shown by the arrow, and the MLE is
  $\hat{\alpha}=0.073$. The shaded region for ${\bar s}$ is the region in
  which no MLE exists, which is the normal fan at the limit point of
  $\alpha\to -\infty$.
\end{example}

\begin{remark}\rm
  Essentially the same argument as ours here provides a classical result on
  the existence of the MLE for a sample from the Dirichlet distribution
  \cite{levin1977}. The log likelihood of the symmetric $m$-variate
  Dirichlet-multinomial distribution of parameter $(-\alpha)$ is given by
  $N\{\xi^i\bar{s}_i-\log (-m\alpha)_n\}$, $\xi_i=\log x_i$, 
  $x_i=(-\alpha)_i/i!$, where a constant term is omitted. This is a curved
  exponential family. Theorem 1 of \cite{levin1977}, which was proved
  using the variation-diminishing property of the Laplace transform, says
  that the MLE exists uniquely if and only if
  \begin{equation}
    \sum_{i=1}^ni^2\bar{s}_i>n+\frac{n(n-1)}{m}
    \label{cond:MLE_DM}
  \end{equation}
  is satisfied. In our context, the assertion is as follows. The moment map for
  the full exponential family is now
  \[
    \eta_i=\frac{\sum_{k=1}^m[m]_kZ_{n-i,k-1}(x)}
      {\sum_{k=1}^m[m]_kZ_{n,k}(x)}x_i, \qquad i\in \{1,...,n\},
  \]
  and the image is the partition polytope $P_n$ instead of the Newton polytope
  ${\rm New}(Z_{n,k})$. The submanifold $M$ is parametrized by
  $\alpha\in(-\infty,0)$ and the two limit points are $\eta=e_{n}$ and
  \[
    \eta_i=\frac{(m-1)^{n-i}}{m^{n-1}}
    \left(\begin{array}{c}n\\i\end{array}\right),
  \]
  which correspond to limits of $\alpha\to 0$ and $\alpha\to -\infty$,
  respectively. The MLE exists if the size index $\bar{s}$ is outside
  the normal fan at $\alpha=-\infty$, which is equivalent to
  (\ref{cond:MLE_DM}).
\end{remark}

Before closing this section, let us summarize numerical methods for evaluating
the MLE. The discussion for the general $A$-hypergeometric distribution was
given in \cite{takayama2015}. For the full exponential family
(\ref{def:like_full}), the MLE is
\[
\hat{y}:={\rm argmax}_y f(y), \qquad
f(y)=\sum_{i=1}^{n-k-1}\bar{s}_{i+2}\log y_i-\log Z_{n,k}(y).
\]
The derivative is
\[
  \frac{\partial f}{\partial y_i}=y_i^{-1}(\bar{s}_{i+2}-\eta_{i+2}(y)),
  \qquad i\in\{1,...,n-k-1\}.
\]
Evaluate $\hat{y}$ is equivalent to finding the inverse image of the map
$\bar{s}=\eta(\hat{y})$. A simple gradient descent algorithm is as follows.

\begin{algorithm}[\cite{takayama2015}]\rm
  Set $j=0$ and small $\epsilon$.
  Provide $y^{(0)}$ and $\eta^{(0)}=\eta(y^{(0)})$.

  \begin{itemize}

  \item[(1)] End if
    \[
      \frac{\partial f^{(j)}}{\partial y_i}=
      (y_i^{(j)})^{-1}(\bar{s}_{i+2}-\eta^{(j)}_{i+2})\approx 0,
      \qquad i\in\{1,...,n-k-1\};
    \]
    
  \item[(2)] Else set
    \begin{equation*}
      y^{(j+1)}_i=y^{(j)}_i+\epsilon\frac{\partial f^{(j)}}{\partial y_i},
      \qquad \eta^{(j+1)}=\eta(y^{(j+1)}),
    \end{equation*}
    and go to (1) while incrementing $j$ by 1.

  \end{itemize}

\end{algorithm}

If we use Newton's method, which is called the natural gradient method in
information geometry, $\partial f/\partial y_i$ may be replaced with 
\[
  \sum_{j=1}^{n-k-1}(H^{-1})_{ij}(\bar{s}_{j+2}-\eta_{j+2}(y)),
\]
where
\[
  (H)_{ij}:=\frac{\partial\eta_{i+2}}{\partial y_j}
  =y_j^{-1}g_{i+2,j+2}, \qquad i,j\in\{1,...,n-k-1\}.
\]
With some tedious algebra, it can be seen that the Fisher metric $g_{ij}$
can be computed by using the Pfaffians, whose explicit form will be given
in Section~\ref{sect:comp}, and the dual coordinate:
\[
    g_{ij}=\sum_{l=1}^{n-k-i}(\tilde{P}_i^{(n)})^{-1}_{j-2,l}\eta_{l+2}
    I_{\{n-k+2\ge i+j\}}-\eta_i\eta_j+\eta_i\delta_{i,j},\qquad
    3\le i,j\le n-k+1.
\]
The symmetry of the Fisher metric $g_{ij}=g_{ji}$ is equivalent to
\[
  \sum_{l=1}^{n-k-i}(\tilde{P}^{(i)})^{-1}_{j-2,l}
  =\sum_{l=1}^{n-k-j}(\tilde{P}^{(j)})^{-1}_{i-2,l},
\]
which is the integrability condition of the Pfaffian system (see
Section~\ref{sect:comp}). Compared with the simple gradient descent, Newton's
method demands the cost of the matrix inversion.

For the curved exponential family, the algorithm needs to be modified slightly.
As an example, we consider the parametrization given in (\ref{def:egibbs_y}).
The gradient descent algorithm is now as follows.

\begin{algorithm}\rm
  Set $j=0$ and small $\epsilon$.
  Provide $\alpha^{(0)}$ and $\eta^{(0)}=\eta(\alpha^{(0)})$.

  \begin{itemize}

  \item[(1)] End if
    \[
      \frac{\partial f^{(j)}}{\partial\alpha}=
      \sum_{i=1}^{n-k-1}\frac{\partial y_i^{(j)}}{\partial\alpha}
      \frac{\partial f^{(j)}}{\partial y_i}\approx 0;
    \]
    
  \item[(2)] Else set
    \[
      \alpha^{(j+1)}=\alpha^{(j)}+\epsilon\frac{\partial f^{(j)}}
      {\partial\alpha},
      \qquad \eta^{(j+1)}=\eta(y^{(j+1)}),
    \]
    and go to (1) while incrementing $j$ by 1,  

  \end{itemize}
  where
    \[
    \frac{\partial y_i}{\partial\alpha}
    =y_i\left\{\frac{i}{1-\alpha}
           -\left(\frac{1}{2-\alpha}+\cdots+\frac{1}{i+1-\alpha}\right)
     \right\}.
    \]
\end{algorithm}
If we use Newton's method, $\partial f/\partial\alpha$ may be replaced with
$(\partial f/\partial\alpha)^{-1}\partial f/\partial\alpha$,
where
\[
\frac{\partial^2 f}{\partial\alpha^2}
=\sum_{i=1}^{n-k-1}
\left\{\frac{\partial^2y_i}{\partial\alpha^2}
\frac{\partial f^{(j)}}{\partial y_i}
+y_i^{-2}\eta_i\left(\frac{\partial y_i}{\partial\alpha}\right)^2
\right\}
-\sum_{i=1}^{n-k-1}\sum_{j=1}^{n-k-1}y_i^{-1}y_j^{-1}g_{ij}
\frac{\partial y_i}{\partial\alpha}\frac{\partial y_j}{\partial\alpha}
\]
with
\begin{align*}
  \frac{\partial^2 y_i}{\partial\alpha^2}
  =&y_i\left[\left\{\frac{i}{1-\alpha}
    -\left(\frac{1}{2-\alpha}+\cdots+\frac{1}{i+1-\alpha}\right)
    \right\}^2\right.\\
  &\left.+\frac{i}{(1-\alpha)^2}
    -\left(\frac{1}{(2-\alpha)^2}+\cdots+\frac{1}{(i+1-\alpha)^2}\right)
    \right].
\end{align*}

\begin{example}\rm
  The data sets considered are from \cite{baayen2001} and concern word
  usage of Lewis Carroll in two works, namely, \textit{Alice's Adventure in
  Wonderland $($Alice in Wonderland$)$} and \textit{Through the looking-glass
  and what Alice found there $($Through the looking-glass$)$.} An empirical
  Bayes approach is as follows. In these data, the size index $s_i$ is
  the number of word types that occur exactly $i$ times. \textit{Alice in
  Wonderland} consists of $n=26,505$ word tokens, and the number of different
  word types in the full text of $26,505$ word tokens is $k=2,651$. For
  example, a word type ``Alice'' occurs exactly 386 times and other word types
  do not occur exactly 386 times, so $s_{386}=1$. Consider application of 
  a Gibbs random partition to the data set. A Gibbs random partition is
  the marginal likelihood of a sample taken from some prior process, and has
  parameter $v$ and $x$ in (\ref{def:Gibbs}), where $x$ has the parametrization
  (\ref{def:egibbs}). Suppose we do not have interest in the parameter $v$.
  Because the number of different word types $k$ is the sufficient statistics
  for $v$, the conditional distribution is free of $v$, and is
  the $A$-hypergeometric distribution (\ref{def:mGibbs}). The (conditional)
  MLE of $\alpha$ was evaluated with the $A$-hypergeometric distribution. To
  evaluate the $A$-hypergeometric polynomials, the asymptotic approximation
  (\ref{asymp_keener}) was employed. After 56 iterations of the gradient
  descent, $\hat{\alpha}$ almost converged to $0.441$. For \textit{Through
  the looking-glass}, $n=28,767$, $k=3,085$, and $\hat{\alpha}=0.478$.
  The finding that $\hat{\alpha}>0$ implies that the Dirichlet-multinomial
  model (see Example~\ref{exa:dir_multi}) is not adequate. The poor fitting of
  the Dirichlet-multinomial model to works by William Shakespeare was pointed
  out by Keener et al.~\cite{keener1978}. Suppose we want to compare
  \textit{Alice in Wonderland} and \textit{Through the looking-glass}.
  The latter is Carroll's second story about Alice. We might hypothesize that
  Carroll benefited from his experience in writing \textit{Alice in
  Wonderland}, and that \textit{Through the looking-glass} might be
  characterized by the greater vocabulary richness. This hypothesis is
  concordant with our result, because larger $\alpha$ implies stronger tendency
  to use word type that have never occurred (see Proposition~9 of
  \cite{pitman1995}). Table 1 displays word frequency spectra of
  \textit{Alice in Wonderland} and \textit{Through the looking-glass}.  
\end{example}

\section{Computation of $A$-hypergeometric polynomials}
\label{sect:comp}

All the applications we have seen so far in
Sections~\ref{sect:test}-\ref{sect:MLE} demand practical methods to evaluate
the $A$-hypergeometric polynomials associated with the rational normal curve
at a given point of the indeterminants. This section is devoted to computational
issues. The $A$-hypergeometric polynomials satisfy a recurrence relation that
comes from the combinatorial structure of the partial Bell polynomials.
Use of the recurrence relation is a method for evaluating
the $A$-hypergeometric polynomials. Alternative algebraic methods that use
the Pfaffian system to evaluate the $A$-hypergeometric polynomials, which
are examples of the holonomic gradient method (HGM), are presented.
The performances of these algorithms are compared, and asymptotic
approximations are also discussed.

Let us discuss methods to numerically evaluate the $A$-hypergeometric
polynomial associated with the rational normal curve. We will present results
for $Z_{n,k}(x)\equiv Z_A((n-k,k)^\top;x)$, where the matrix $A$ is given in
(\ref{def:A-hyp_Bu}) with $r=n\ge k+2\ge 4$. It is straightforward to modify
the following discussion for general $A$-hypergeometric polynomials associated
with a monomial curve whose matrix $A$ has the form of (\ref{def:A-hyp_As})
and $b\in{\mathbb N}A$, by fixing some of the indeterminants to be $0$.
The cases with $n=k,k+1$ are trivial because the $A$-hypergeometric polynomials
are monomials $Z_{n,n-1}(x)=x_1^{n-2}x_2/(n-2)!$ and $Z_{n,n}(x)=x_1^n/n!$.

A method to evaluate $Z_{n,k}(x)$ is to use the recurrence relation in
Proposition~\ref{prop:aBell_u_rec}. As another method, let us discuss
applying the HGM \cite{nakayama2011,takayamaweb}. The HGM is a method for
evaluating holonomic functions numerically. For our problem, totality of
the standard monomials of the $A$-hypergeometric ideal $H_A(b)$ is given in
Proposition~\ref{prop:stand_mono}. Because the factor ring $D_{n-k+1}/I$ is
finite dimensional, we should have the following system of partial
differential equations 
\begin{equation}
  \theta_i\bullet Q_{n,k}=P^{(n,k)}_iQ_{n,k}, \qquad i\in\{1,...,n-k+1\},
  \label{def:pfaf-sys}
\end{equation}
where 
\[
  Q_{n,k}(x)=(1,\theta_3,...,\theta_{n-k+1})^\top\bullet Z_{n,k}(x).
\]
This system is called the Pfaffian system, and it represents contiguity
relations among the $A$-hypergeometric system. The first step in developing
the HGM is to obtain the Pfaffians $P^{(n,k)}$.

In principle, Pfaffians can be obtained by the Buchberger algorithm and
reductions of the standard monomials with the reduced Gr\"obner basis of
$H_A(b)$ \cite{saito2010,hibi2014}. However, such general treatment is
unrealistic because the computational cost grows rapidly with the holonomic
rank. In addition, it is non-trivial to treat the singular loci that appear
in the Pfaffians. For actual applications, explicit expressions for
the Pfaffians are inevitable for a specific solution rather than a general
one. Goto and Matsumto obtained such an expression for the $A$-hypergeometric
polynomial of type $(i+1,i+j+2)$, which appears as the normalizing constant of
the two-way contingency tables with fixed marginal sums \cite{goto2016}.
Following them, we call the vector $Q_{n,k}(x)$ the Gauss-Manin vector.

Let us consider how to obtain explicit expressions for the Pfaffians in
(\ref{def:pfaf-sys}), The first rows are immediately determined with
the annihilator (\ref{def:A-hyp_1}). That is,
\begin{align}
  &(P_1^{(n,k)})_{1,\cdot}=(2k-n,1,2,...,n-k-1),\qquad
    (P_2^{(n,k)})_{1,\cdot}=(n-k,-2,-3,...,-n+k),\nonumber\\
  &(P_i^{(n,k)})_{1,j}=\delta_{i,j+1}, \qquad 1\le j\le n-k,
  \qquad 3\le i\le n-k+1.
  \label{pfaf-1}
\end{align}
However, other rows demand some consideration. Taking derivatives of
the definition of the $A$-hypergeometric polynomial (\ref{def:A-hyp_pol}),
we have
\begin{equation}
  \theta_i\bullet Z_{n,k}(x)=x_iZ_{n-i,k-1}(x),
  \qquad 1\le i\le n-k+1.
  \label{thetaZ}
\end{equation}
Therefore, the Gauss-Manin vector becomes a simple expression, namely
\[
  Q_{n,k}(x)=(Z_{n,k},x_3 Z_{n-3,k-1},..., x_{n-k+1}Z_{k-1,k-1})^\top.
\]
Because the $A$-hypergeometric polynomial has finite terms, higher-order
differential operators provide annihilators. Using (\ref{thetaZ}), it can be
seen that the second derivative yields annihilators:
\[
  \theta_i\theta_j-\delta_{i,j}\theta_i, \qquad i+j\ge n-k+3.
\]
Finally, the recurrence relation in Proposition~\ref{prop:aBell_u_rec} yields
the following annihilators:
\begin{eqnarray}
  &&n\theta_i-\sum_{j=0}^{n-k+1-i}(j+1)\theta_i\theta_{j+1},
  \qquad j\le i\le (n-k+2)/2,\label{pfaf-rec1}\\
  &&(n-i)\theta_i-\sum_{j=0}^{n-k+1-i}(j+1)\theta_i\theta_{j+1},
  \qquad (n-k+2)/2< i\le n-k+1.
  \label{pfaf-rec2}
\end{eqnarray}
By using the annihilators (\ref{def:A-hyp_2}), the annihilators
(\ref{pfaf-rec1}) and (\ref{pfaf-rec2}) are recast into 
\begin{eqnarray*}
  &&(n-i)\theta_i+(2j-i)\frac{x_ix_{l+1}}{x_j^2}\theta_j
  -\sum_{l=0}^{n-k+1-i}(l+1)\frac{x_ix_{l+1}}{x_jx_{i+l+1-j}}
  \theta_j\theta_{i+l+1-j}, \,\, i+1\le 2j\le n-k+2,\\
  &&(n-i)\theta_i-\sum_{l=0}^{n-k+1-i}(l+1)\frac{x_ix_{l+1}}{x_jx_{i+l+1-j}}
  \theta_j\theta_{i+l+1-j}, \qquad 2j<i+1,\,2j>n-k+2,
\end{eqnarray*}
for $j \le i \le n-k+1$. Solving this system of annihilators for the second
derivatives we can obtain the Pfaffian system (\ref{def:pfaf-sys}).

\begin{lemma}
  \label{lem:pfaffian}
  The elements of the Pfaffians for the $A$-hypergeometric polynomial
  $Z_{n,k}(x)$ are, for $1\le l,m\le n-k$ and $1\le i\le n-k+1$,
  \begin{equation}
    (P_i^{(n,k)})_{l,m}=\delta_{l,1}(P_i^{(n,k)})_{1,m}
    +\delta_{l,m}\delta_{l,i-1}I_{\{i\ge 3\}}
    +(\tilde{P}_i^{(n)})^{-1}_{l-1,m-i}
    I_{\{2\le l\le n-k-i+1,i+1\le m\le n-k\}},
    \label{pfaf-2}
  \end{equation}
  where $(P_i^{(n,k)})_{1,m}$ are given in $(\ref{pfaf-1})$ and
  $\tilde{P}_i^{(n)}$ are upper triangular matrices with elements
  \begin{equation*}
    (\tilde{P}_i^{(n)})_{l,m}
    :=\frac{m-l+1}{n-i-l-1}\frac{x_{m-l+1}x_{i+l+1}}{x_{m+2}x_i},
    \qquad 1\le l\le m\le n-k-i.
  \end{equation*}
\end{lemma}

The following explicit example may help explain the discussion so far.

\begin{example}\rm
  \label{exa:k=n-2}
  Let us consider the explicit solution basis for $n=k+2\ge 4$. The holonomic
  rank is ${\rm vol}(A)={\rm rank}(H_A(b))=n-k=2$. For a weight vector
  $w=(1,0,0)$   the reduced Gr\"obner basis of the toric ideal $I_A$ is
  $\{\partial_1\partial_3-\partial_2^2\}$ and the fake exponents are
  $(n-4,2,0)\in{\mathbb N}^3$ and $(0,2n-6,4-n)$.
  The unique polynomial solutions around the origin are constant multiples of
  the $A$-hypergeometric polynomial
  \begin{equation}
    Z_{n,n-2}(x)=\frac{x_1^{n-4}x_2^2}{2(n-4)!}
    \left(1+\frac{2}{n-3}y_1\right), \qquad y_1=\frac{x_1x_3}{x_2^2},
    \label{sol_k=n-2-1}
  \end{equation}
  which is the constant multiple of the partial Bell polynomial. The other
  solution basis can be obtained by perturbing $b$ (see Section 3 of
  \cite{saito2010}). The result is 
  \begin{align}
    &Z_{n,n-2}(x)\log y_1+x_2^{2n-6}x_3^{4-n}
      \left\{
      \frac{y_1^{n-2}}{(n-2)!}
        {}_3F_2\left(\frac{3}{2},1,1;n-1,3;4y_1\right)
        \right.\nonumber\\
    &\left.-\frac{y_1}{(n-3)(n-3)!}-\frac{(n-5)!}{(2n-6)!}(-1)^n
      \sum_{i=0}^{n-5}\frac{(3-n)_i(7/2-n)_i}{(5-n)_i}\frac{(-4y_1)^i}
          {i!}\right\},
   \label{sol_k=n-2-2}
  \end{align}
  for $n\ge 5$. For the case of $n=4$, the two fake exponents degenerate
  and the result is (\ref{sol_k=n-2-2}) with the last term replaced by
  $(-4y_1)$. The Pfaffian system is obtained by the Buchberger algorithm and
  reductions of the standard monomials $\{1,\theta_3\}$ with the reduced
  Gr\"obner basis of the hypergeometric ideal $H_A(b)$, whose explicit
  expression is
  \begin{align*}
    &\{\theta_1-\theta_3-n+4,\theta_2+2\theta_3-2,x_1x_3\theta_2(\theta_2-1)
    -x_2^2\theta_1\theta_3,\\
    &(x_2^2-4x_1x_3)\theta_3^2+(4x_1x_3+(n-4)x_2^2)\theta_3-x_1x_3\theta_2\}.
  \end{align*}
  The Pfaffians for $Q_{n,n-2}(x)=(Z_{n,n-2},Z_{n-3,n-3})^\top$ are
  \begin{eqnarray}
    &&P_1^{(n,n-2)}=
    \left(
      \begin{array}{cc}n-4&1\\
      \frac{2y_1}{1-4y_1}&\frac{(10-4n)y_1}{1-4y_1}
      \end{array}
    \right),\qquad
    P_2^{(n,n-2)}=
    \left(
      \begin{array}{cc}2&-2\\ 
        \frac{-4y_1}{1-4y_1}&\frac{4y_1+2(n-3)}{1-4y_1}
      \end{array}
    \right),\nonumber\\
    &&P_3^{(n,n-2)}=
    \left(
      \begin{array}{cc}0&1\\
      \frac{2y_1}{1-4y_1}&\frac{-6y_1-(n-4)}{1-4y_1}
      \end{array}
      \right).
  \label{pfaf-g}    
  \end{eqnarray}
  The singular loci is $y_1=1/4$, which is on the boundary of the convergence
  radius of the expression (\ref{sol_k=n-2-2}). A linear combination of
  the above two solution bases satisfies the Pfaffian system
  (\ref{def:pfaf-sys}) with Pfaffians (\ref{pfaf-g}). In contrast,
  the $A$-hypergeometric polynomial (\ref{sol_k=n-2-1}) satisfies
  the Pfaffian system (\ref{def:pfaf-sys}) with Pfaffians (\ref{pfaf-2}):
  \[
  P_1^{(n,n-2)}=\left(\begin{array}{cc}
      n-4&1\\0&n-3
    \end{array}\right),\qquad
  P_2^{(n,n-2)}=\left(\begin{array}{cc}
      2&-2\\0&0
    \end{array}\right),\qquad
  P_3^{(n,n-2)}=\left(\begin{array}{cc}
      0&1\\0&1
    \end{array}\right),
  \]
  but (\ref{sol_k=n-2-2}) does not satisfy it.
\end{example}
  
Let us discuss how to evaluate the Gauss-Manin vector $Q_{n,k}(x)$ at a given
point of indeterminants $x\in{\mathbb R}^{n-k+1}_{>0}$. The original HGM
is as follows \cite{nakayama2011}. Because the difference of $Q_{n,k}(x)$ can
be approximated as
\[
  Q_{n,k}(x+h)-Q_{n,k}(x)
    \approx\sum_{i=1}^{n-k+1}\frac{h_i}{x_i}\theta_i\bullet Q_{n,k}
    =\sum_{i=1}^{n-k+1}\frac{h_i}{x_i}P_i^{(n,k)}Q_{n,k},
\]
a numerical integration method for difference equations, such as
the Runge-Kutta method, provides the numerical value. For the implementation,
the initial value at some initial point of indeterminants is needed. One method
is direct evaluation of the series at the initial point \cite{hashiguchi2013}.
However, for the computation of $Z_{n,k}(x)$, simple and exact expressions are
available at some specific points of indeterminants, which comes for known
results on the partial Bell polynomials \cite{comtet1974}. For example,
\begin{equation}
  Z_{n,k}(1_\cdot)
  =\frac{1}{k!}\left(\begin{array}{c}n-1\\k-1\end{array}\right),\qquad 
  Z_{n,k}\left(\frac{(1/2)_{\cdot-1}}{\cdot!}\right)
  =\frac{(2n-k-1)!}{2^{2(n-k)}n!(n-k)!(k-1)!}
  \label{pfaf-ini},
\end{equation}
where $1_\cdot$ represents the sequence $x_i=1$, $i\ge 1$.
To evaluate
Pfaffians numerically, the cost of taking the inverse of the upper-triangular
matrix $\tilde{P}^{(i)}$ dominates. It takes roughly $O((n-k-i)^2)$ for each
$i$ and thus at most $O((n-k)^3)$ computation is needed to evaluate all
the Pfaffians. In the iteration steps in the numerical integration, the cost
scales linearly with the number of steps. Large numbers of steps give more
accurate result, with the expense of computational cost. If the initial point
is near the point at which we wish to evaluate, better accuracy can be attained
with fewer steps.

To compute the normalizing constant of the two-way contingency tables with
fixed marginal sums, another type of HGM algorithm, which is based on
difference equations among $A$-hypergeometric polynomials, was employed
\cite{goto2016}. Following \cite{ohara2015}, we call this method the difference
HGM.

Noting the derivative (\ref{thetaZ}) for $i=1,2$, the Pfaffian system is recast
into a difference equation:
\[
  x_iQ_{n-i,k-1}=P_i^{(n,k)}Q_{n,k}, \qquad 2\le k\le n-2.
\]
If $2\le k<n/2$, it is straightforward to see that the Gauss-Manin vector
can be obtained by simple matrix multiplication:
\begin{equation}
  Q_{n,k}=x_1^{k-1}\prod_{i=0}^{k-2}\left(P_1^{(n-i,k-i)}\right)^{-1}
  Q_{n-k+1,1},
  \qquad
  (Q_{n-k+1,1})_i=(\delta_{i,1}+\delta_{i,n-k})x_{n-k+1},
  \label{dhgm-1}
\end{equation}
where the inverse of the Pfaffian $P_1^{(i,j)}$ is given as
\[
  (P^{(i,j)}_1)^{-1}
  =\frac{1}{2j-i}\left(
    \begin{array}{ccccc}
      1&-1&-2&\cdots&-(i-j-1)\\
      0&  &  &(2j-i)E_{i-j-1}&\\
    \end{array}
    \right)
    \left(
    \begin{array}{cc}
      1 & 0\\
      0 & \tilde{P}_1^{(i)}\\
    \end{array}
  \right).
\]
For $n/2\le k\le n-2$, naive application of (\ref{dhgm-1}) fails because
of the singularity in $(P_1^{(i,j)})^{-1}$. Nevertheless, the following
algorithm provides the Gauss-Manin vector.

\begin{algorithm} \rm

  Let $n/2\le k\le n-2$. Compute $Z_{2(n-k)-1,n-k-1}$, $Z_{2(n-k)-3,n-k-2}$,
  ..., $Z_{5,2}$ by using (\ref{dhgm-1}).
  $Q_{4,2}=(x_1x_3+x_2^2/2,x_1x_3)^\top$.
  
  \begin{itemize}

  \item[(1)] Set $i=2$.

  \item[(2)] Increment $i$ and compute
    \begin{equation*}
      Q_{2i,i}=\frac{1}{i}
      \left(
        \begin{array}{cccccc}
          2x_1    &x_2&-1&-2  &\cdots&-(i-2)\\
          ix_1    &0  &-2i&-3i&\cdots&-i(i-1)\\
          0       &0  &   &   &E_{i-2}&\\
        \end{array}
      \right)  
      \left(
        \begin{array}{cc}
          E_2&0\\
          0  &x_2\tilde{P}_2^{(2i)}\\
        \end{array}
      \right)
      \left(
      \begin{array}{c}
        Z_{2i-1,i-1}\\
        Q_{2i-2,i-1}
        \end{array}
      \right).
    \end{equation*}

  \item[(3)] If $i<n-k$ go to (2).

  \item[(4)] Else we have
    \[
      Q_{n,k}=x_1^{2k-n}\prod_{i=0}^{2k-n-1}
      \left(P_1^{(n-i,k-i)}\right)^{-1}Q_{2(n-k),n-k}.
    \]
    
  \end{itemize}
  
\end{algorithm}

The computation costs of the three methods for numerical evaluation of
the Gauss-Manin vector are summarized as follows. Note that the recurrence
relation in Proposition~\ref{prop:aBell_u_rec} provides the Gauss-Manin
vector as a by-product. The recurrence relation demands $O((n-k)^2k)$
computation because the cost is $O((n-k)^2)$ for each $k$. For the HGM,
the cost is $O((n-k)^3)$ times the number of iteration steps. For
the difference HGM, if $2\le k<n/2$ the cost is $O((n-k)^2k)$, while if
$n/2\le k\le n-2$ it is $O((n-k)^4+(n-k)^2(2k-n))$. Accuracy is also
an important concern, but it is difficult to give a general statement.
In the following example, comparison of the performance of these three methods
is demonstrated with a specific example. Improvements of implementation of
the HGM algorithms will be discussed elsewhere.

\begin{example}\rm
  \label{exa:GFC}
  The generalized factorial coefficient, which appeared in
  Section~\ref{sect:exc}, is $1/n!$ times the $A$-hypergeometric polynomial
  with $x_i=(1-\alpha)_{i-1}/i!$, $i=1,2,...$. For the initial values of
  the HGM, the exact expressions in (\ref{pfaf-ini}) can be used; the former
  corresponds to $\alpha=-1$ and the latter corresponds to $\alpha=1/2$.
  For the Pfaffians, we have
  \[
    (\tilde{P}^{(i)})^{-1}_{l-1,m-i}=
    (-1)^l(n-m-1)\frac{[m+1]_{l+i}}{(l+1)!i!}
    \frac{(\alpha-1)}{(m-\alpha)}\frac{(\alpha-l)_{m-i}}{(i-\alpha)_{m-i}}.
  \]
  Tables 2 and 3 display results of the numerical evaluation of
  the Gauss-Manin vector by the three methods: the recurrence relation,
  the HGM, and the difference HGM. The Runge-Kutta method was employed for
  the numerical integration in the HGM, where the integration was initiated
  from the point $\alpha=-1$ and the number of steps was 500. All computations
  were implemented in quadruple-precision floating-point arithmetic in the C
  programming language and executed by one core of a 2.66 GHz Intel Core2 Duo
  CPU P8800 processor. Table 2 gives the results for $\alpha=1/2$, which was
  chosen because we know the true values (\ref{pfaf-ini}). Roughly speaking,
  the difference HGM demands less computational cost, while the recurrence
  relation gives more accurate estimates. Assume $n-k$ is small. If $k$ is
  large, the HGM and the difference HGM demand less computational cost than
  the use of the recurrence relation; otherwise, the HGM and the difference
  HGM would demand more computational cost. The HGM and the difference HGM
  lose accuracy for large $n-k$. In particular, the HGM gave negative value
  for $n-k=30$, so we omit those results. The loss of accuracy comes from
  the fact that $\alpha=-1$ and $\alpha=1/2$ are distant from each other. In
  fact, the HGM works for evaluation at $\alpha=0.1$ and gave similar values
  to those of the recurrence relation (Table 3), although we do not know
  the true values at $\alpha=0.1$.
\end{example}

If $n-k$ is large all three methods presented above fail, in which case
asymptotic approximation is inevitable. For specific parametrization of
the indeterminants $x$, we can consider the asymptotic form. However,
Theorem~6 in \cite{takayama2015} established an asymptotic approximation for
the general $A$-hypergeometric distributions in the regime of $b=\gamma\beta$,
for some $\beta\in{\rm int}(\mathbb{R}_{\ge 0}A)$, to a Gaussian density.
The asymptotic form of the $A$-hypergeometric polynomial comes from
the normalization constant. 

\begin{theorem}[\cite{takayama2015}]
  \label{thm:IPS}
  For the $A$-hypergeometric polynomial $Z_{n,k}(x)$, there exists a unique
  $m\in{\mathbb R}^{n-k+1}_{>0}$ such that $Am=(n-k,k)^\top$, $y=m^{\bar{A}}$,
  and
  \begin{equation}
    Z_{\gamma n,\gamma k}(x)\sim
    \frac{(x^m)^\gamma}{\Gamma(\gamma m+1)}
    \frac{(2\pi\gamma)^{n-k-1}}{\det(\bar{A}M^{-1}\bar{A}^\top)^{1/2}},
    \qquad \gamma\to\infty,
    \label{asymp_ips}
  \end{equation}
  where $M={\rm diag}(m)$ and
  $\Gamma(\gamma m+1)=\prod_{i=1}^{n-k+1}\Gamma(\gamma m_i+1)$.
\end{theorem}

The derivation of $m$ requires more explanation. Suppose a count vector $c$
follows a log-affine model
\[
  \frac{x^c\exp(-1\cdot x)}{c!}, \qquad
  \log x(\theta)=\bar{A}
  \bar{A}^\top(\bar{A}\bar{A}^\top)^{-1}\log y+A^\top\theta,
\]
where the generalized odds ratio $y$ is fixed. Let $\hat{\theta}(y)$ be
the MLE of $\theta$ in the model. The MLE can be evaluated numerically with
the iterative proportional scaling (IPS) procedure, which was originally
invented for contingency tables and hierarchical models. Here,
$m=x(\hat{\theta}(y))$ is the unique solution of $Am=Ac$ and $y=m^{\bar{A}}$.
An illustrative example follows.

\begin{example}\rm
  For the case of $n=k+2\ge 4$, $\bar{A}=(1,-2,1)$ and we have
  \[
    \log m=\log x(\hat{\theta}(y_1))
    =\frac{\log y_1}{6}
     \left(
       \begin{array}{c}1\\-2\\1\end{array}
     \right)
     +\left(
       \begin{array}{c}
         \hat{\theta}_2\\\hat{\theta}_1+\hat{\theta}_2\\
         2\hat{\theta}_1+\hat{\theta}_2
       \end{array}
     \right).
  \]
  $y=m^{\bar{A}}$ is obvious. IPS solves $Am=(n-k,k)^\top$, or
  \[
  \left\{
  \begin{array}{l}
     y_1^{-1/3}e^{\hat{\theta}_1+\hat{\theta}_2}+
     2y_1^{1/6}e^{2\hat{\theta}_1+\hat{\theta}_2}=n-k,\\
     y_1^{1/6}e^{\hat{\theta}_2}+
     y_1^{-1/3}e^{\hat{\theta}_1+\hat{\theta}_2}+
     y_1^{1/6}e^{2\hat{\theta}_1+\hat{\theta}_2}=k.
  \end{array}
  \right.
  \]
  In particular, if $n=2k$, $\hat{\theta}_1=0$ and
  $\hat{\theta}_2=\log\{k y_1^{1/3}/(1+2y_1^{1/2})\}$. 
\end{example}

\begin{example}\rm
  This is a continuation of Example~\ref{exa:GFC}. The accuracy of asymptotic
  forms of the generalized factorial coefficients is examined. An asymptotic
  form has been obtained by Keener et al. \cite{keener1978}. Here, we
  reproduce the result because the expressions in \cite{keener1978} contain
  some mistakes. Let $\lambda=k/n$.
  \begin{equation}
    Z_{n,k}((1-\alpha)_{\cdot-1}/\cdot!)
    \sim\frac{(1-\alpha\lambda^*)^{n-1/2}}{\sqrt{2\pi n\sigma_*^2}k!}
    \frac{\lambda^{k+1/2}}{((-\alpha)(\lambda^*-\lambda))^{k-1/2}}.
    \label{asymp_keener}
  \end{equation}
  Here, $\lambda^*$ solves
  $\lambda=\lambda^*
  \{1-((1-\alpha\lambda^*)/(-\alpha\lambda^*))^{\alpha}\}$ and the unique
  positive solution for $\alpha<0$ and the unique negative solution for
  $\alpha>0$. Moreover,
  $\sigma^2_*=-{\rm sgn}(\alpha)(\lambda^*-\lambda)
  (\lambda+\alpha(\lambda^*-\lambda)/(1-\alpha\lambda^*))/\lambda_*^2$.
  Table 4 gives some results for the case of $n=2k$. We set
  $(k,k)=\gamma(2,2)^\top$ and $m$ was evaluated at $(2,2)^\top$. It can be
  seen that the asymptotic form (\ref{asymp_keener}) gave very good
  approximation, whereas the error of the asymptotic form (\ref{asymp_ips})
  was generally large. Of course, this is not a fair comparison, since
  (\ref{asymp_keener}) was derived for specific $A$-hypergeometric
  polynomials.  
\end{example}

\vspace{5mm}

\noindent
{\bf Acknowledgments.} The author is grateful to Professor Nobuki Takayama for
insightful comments and suggestions on the holonomic gradient methods.
The author also thanks the two referees for their careful reading of
the manuscript and suggestions for improving its presentation.

\vspace{1cm}

\newpage

\begin{table}
\small

\begin{tabular}{rrrrrrrrrrr}
  \hline
  $i$  & $s_i$ & $\eta_i$ && $i$ & $s_i$ & $\eta_i$ && $i$ & $s_i$ & $\eta_i$\\
  \hline
  1  & 1176 & 1282.40 && 11 & 23 & 25.04 && 21 & 6 & 9.30\\
  2  & 402  &  356.63 && 12 & 20 & 21.93 && 22 & 3 & 8.65\\
  3  & 233  &  184.47 && 13 & 34 & 19.42 && 23 & 3 & 8.07\\
  4  & 154  &  117.47 && 14 & 20 & 17.34 && 24 & 6 & 7.55\\
  5  & 99   &   83.24 && 15 & 12 & 15.60 && 25 & 9 & 7.09\\
  6  & 57   &   62.96 && 16 &  9 & 14.14 && 26 & 4 & 6.66\\
  7  & 65   &   49.76 && 17 &  9 & 12.88 && 27 & 6 & 6.28\\
  8  & 52   &   40.63 && 18 & 10 & 11.80 && 28 & 3 & 5.93\\
  9  & 32   &   33.97 && 19 &  8 & 10.85 && 29 & 6 & 5.61\\
  10 & 36   &   28.94 && 20 &  5 & 10.03 && 30 & 6 & 5.32\\
  \hline
\end{tabular}

\vspace{5mm}
\begin{tabular}{rrrrrrrrrrr}
  \hline
  $i$  & $s_i$ & $\eta_i$ && $i$ & $s_i$ & $\eta_i$ && $i$ & $s_i$ & $\eta_i$\\
  \hline
  1  & 1491 & 1579.94 && 11 & 26 & 26.71 && 21 & 7 & 9.73\\
  2  & 460  &  410.76 && 12 & 30 & 23.33 && 22 & 9 & 9.04\\
  3  & 259  &  207.59 && 13 & 22 & 20.60 && 23 & 2 & 8.42\\
  4  & 148  &  130.38 && 14 & 19 & 18.35 && 24 & 3 & 7.87\\
  5  & 113  &   91.48 && 15 & 12 & 16.48 && 25 & 1 & 7.38\\
  6  & 78   &   68.68 && 16 & 21 & 14.90 && 26 & 5 & 6.93\\
  7  & 61   &   53.97 && 17 & 12 & 13.55 && 27 & 3 & 6.53\\
  8  & 47   &   43.83 && 18 & 11 & 12.39 && 28 & 7 & 6.16\\
  9  & 28   &   36.49 && 19 & 16 & 11.38 && 29 & 5 & 5.82\\
  10 & 26   &   30.98 && 20 &  9 & 10.50 && 30 & 2 & 5.52\\
  \hline
\end{tabular}

\vspace{5mm}

\caption{Word frequency spectra of \textit{Alice in Wonderland} (top) and
  \textit{Through the looking-glass} (bottom). Entries for $i>30$ are omitted.}

\end{table}
  
\newpage
  
\begin{table}
\small
\begin{tabular}{lccccc}
\hline
$n$     & 100 & 200 & 400 & 800\\
\multicolumn{5}{l}{$n-k=10$}\\
\hline
\multicolumn{5}{l}{exact}\\
$\log Z$& $-300.737$ & $-786.291$ & $-1909.67$ & $-4447.24$\\
\hline
\multicolumn{5}{l}{recursion}\\
$\log Z$& $-300.737$ & $-786.291$ & $-1909.67$ & $-4447.24$\\
time    & $0.019$ & $0.033$ & $0.067$ & $0.141$\\
\hline
\multicolumn{5}{l}{HGM}\\
$\log Z$& $-300.735$ & $-786.291$ & $-1909.67$ & $-4447.24$\\
time    & $0.084$ & $0.092$ & $0.092$ & $0.092$\\
\hline
\multicolumn{5}{l}{difference HGM}\\
$\log Z$& $-300.737$ & $-786.291$ & $-1909.67$ & $-4447.24$\\
time    & $<0.001$ & $<0.001$ & $<0.001$ & $<0.001$\\
\hline


\multicolumn{5}{l}{$n-k=30$}\\
\hline
\multicolumn{5}{l}{exact}\\
$\log Z$& $-204.912$ & $-661.958$ & $-1757.39$ & $-4267.17$\\
\hline
\multicolumn{5}{l}{recursion}\\
$\log Z$& $-204.912$ & $-661.958$ & $-1757.39$ & $-4267.17$\\
time    & $0.116$ & $0.229$ & $0.462$ & $0.928$ \\
\hline
\multicolumn{5}{l}{difference HGM}\\
$\log Z$& $-204.857$ & $-652.683$ & $-1743.14$ & $-4250.61$\\
time    & $0.003$ & $0.004$ & $0.008$ & $0.013$\\
\hline

\end{tabular}

\vspace{5mm}

\caption{Evaluations of the $A$-hypergeometric polynomial
  $Z_{n,k}((0.5)_{\cdot-1}/\cdot!)$.}

\end{table}

\newpage

\begin{table}
\small
\begin{tabular}{lccccc}
\hline
$n$  & 100 & 200 & 400 & 800\\
\multicolumn{5}{l}{$n-k=10$}\\
\hline
\multicolumn{5}{l}{recursion}\\
$\log Z$& $-295.383$& $-780.678$ & $-1903.92$ & $-4441.43$\\
time    & $0.018$ & $0.031$ & $0.067$ & $0.140$ \\
\hline
\multicolumn{5}{l}{HGM}\\
$\log Z$& $-295.383$& $-780.678$ & $-1903.92$ & $-4441.43$\\
time    & $0.092$ & $0.092$ & $0.091$ & $0.092$ \\
\hline
\multicolumn{5}{l}{difference HGM}\\
$\log Z$& $-295.383$& $-780.678$ & $-1903.92$ & $-4441.43$\\
time    & $<0.001$ & $<0.001$ & $<0.001$ & $<0.001$ \\
\hline

\multicolumn{5}{l}{$n-k=30$}\\
\hline
\multicolumn{5}{l}{recursion}\\
$\log Z$& $-192.188$ & $-646.832$ & $-1741.03$ & $-4250.18$\\
time & $0.116$ & $0.232$ & $0.462$ & $0.930$ \\
\hline
\multicolumn{5}{l}{HGM}\\
$\log Z$& $-192.194$& $-646.892$ & $-1741.17$ & $-4250.24$ \\
time    & $4.584$ & $4.592$ & $4.587$ & $4.585$\\
\hline
\multicolumn{5}{l}{difference HGM}\\
$\log Z$& $-192.178$& $-641.643$ & $-1732.10$ & $-4239.57$ \\
time    & $0.004$ & $0.005$ & $0.009$ & $0.012$\\
\hline
\end{tabular}

\vspace{5mm}

\caption{Evaluations of the $A$-hypergeometric polynomial
  $Z_{n,k}((0.9)_{\cdot-1}/\cdot!)$.}

\newpage

\end{table}
\begin{table}
\small
\begin{tabular}{cccc}
\hline
$\alpha$  & exact & (\ref{asymp_keener}) & IPS \\
\hline
\multicolumn{3}{l}{$n=800$, $k=400$, $\gamma=200$}\\
$0.5$     & $-1796.01$ & $-1796.71$ & $-2018.27$  \\
$-1$      & $-1450.24$ & $-1450.24$ & $-1561.20$  \\
\hline
\multicolumn{3}{l}{$n=400$, $k=200$, $\gamma=100$}\\
$0.5$     & $-763.047$ & $-763.739$ & $-872.232$  \\
$-1$      & $-589.888$ & $-589.888$ & $-643.657$  \\
\hline
\multicolumn{3}{l}{$n=100$, $k=50$, $\gamma=25$} \\
$0.5$     & $-126.088$ & $-126.779$ & $-150.912$ \\
$-1$      & $-82.3871$ & $-82.3846$ & $-93.7043$ \\
\hline
\multicolumn{3}{l}{$n=40$, $k=20$, $\gamma=10$}  \\
$0.5$     & $-35.1882$ & $-35.8765$ & $-43.4749$ \\
$-1$      & $-17.3794$ & $-17.3731$ & $-20.5405$ \\
\hline

\end{tabular}

\vspace{5mm}

\caption{Asymptotic approximations of the $A$-hypergeometric polynomial
  $Z_{n,k}((1-\alpha)_{\cdot-1}/\cdot!)$.}

\end{table}

\newpage

\begin{figure}

  \begin{center}

  \begin{tabular}{cc}
  \includegraphics[width=.5\textwidth]{./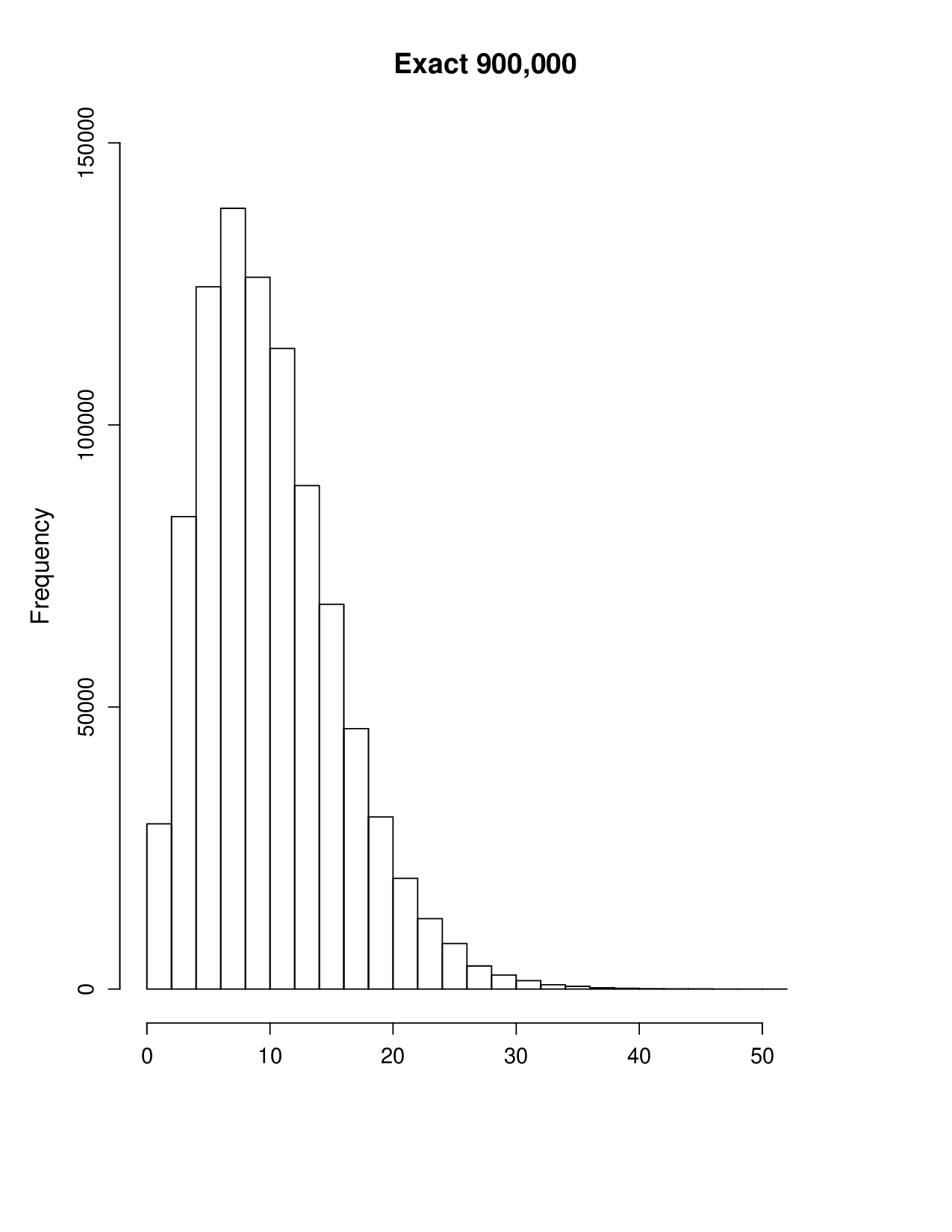}&
  \includegraphics[width=.5\textwidth]{./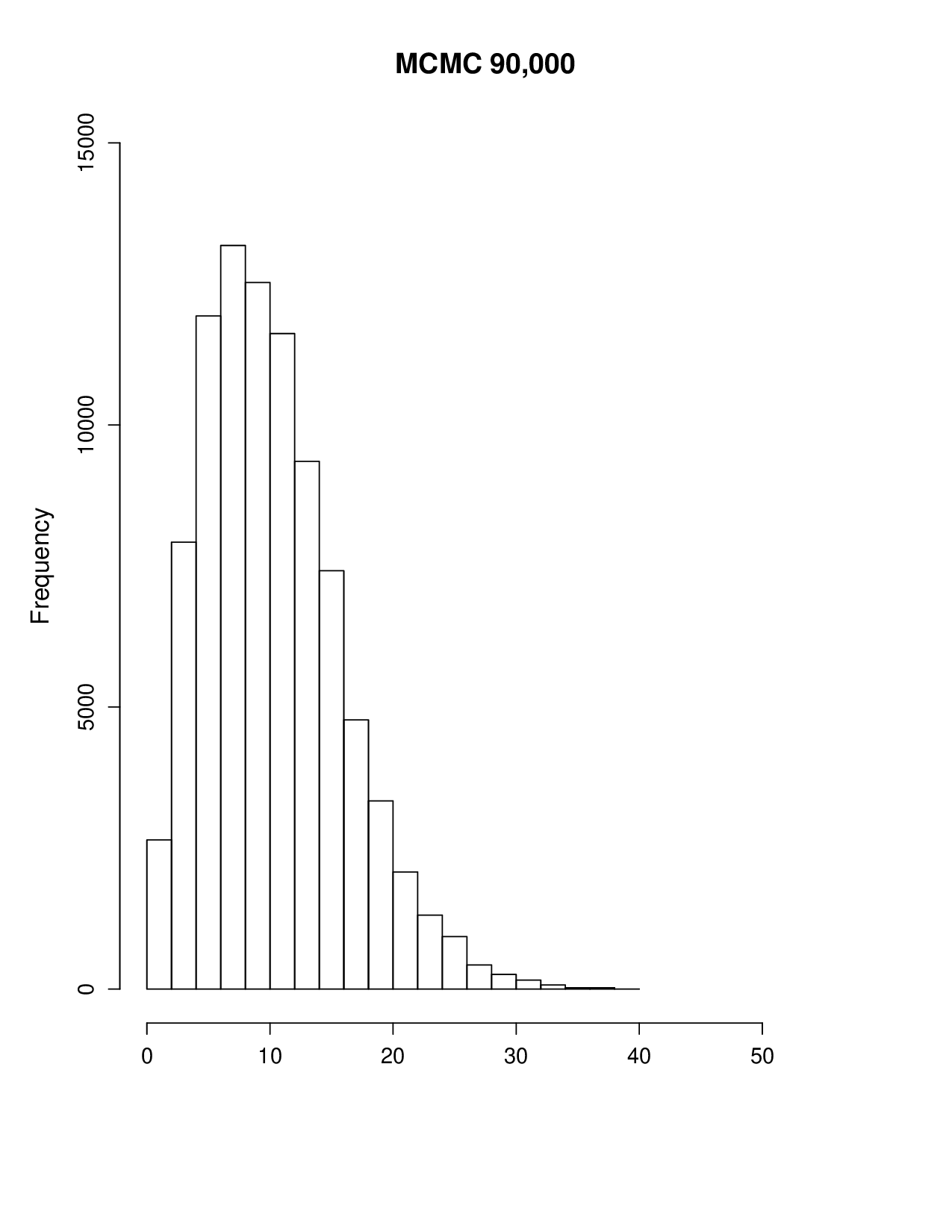}
  \end{tabular}
  
  \end{center}

  \vspace{2cm}
  
  \caption{Histograms of the $\chi^2$ statistic. Left is
    a result by the exact sampler (Algorithm~\ref{alg:exact}) with 900,000
    draws. Right is a result by the MCMC sampler with the Markov basis in
    Proposition~\ref{prop:MCMC} based on a walk of 90,000 steps (with
    the initial 10,000 steps having been discarded).}
    
\end{figure}

\newpage

\begin{figure}

  \begin{center}
    
  \includegraphics[width=\textwidth]{./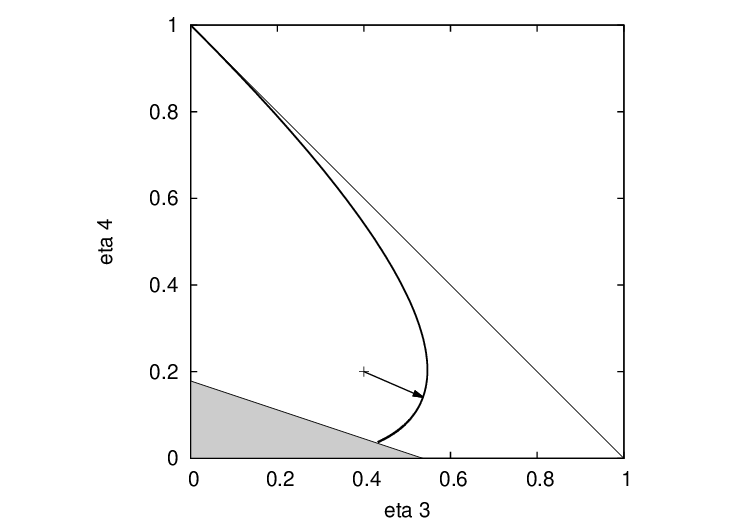}
  
  \end{center}

  \vspace{2cm}
  
\caption{The Newton polytope ${\rm New}(Z_{10,7})$ projected onto the
  $\eta_3$-$\eta_4$ plane is the lower triangle. The curve is $M$
  and the shaded region is the region of ${\bar s}$ in which no MLE exists.
  The MLE for the case of $\bar{s}=(4.8,1.6,0.4,0.2)$ is also shown as
  the arrow; the orthogonal projection from $\bar{s}$ to $M$.}

\end{figure}


\begin{thebibliography}{99}

\bibitem{gelfand1990}
  Gel'fand, I.M., Zelevinsky, A.V., Kapranov, M.M. (1990)
  Generalized Euler Integrals and $A$-hypergeometric functions.
  \textit{Adv. Math.},
  \textbf{84}, 255--271.
  
\bibitem{takayama2015}
  Takayama N., Kuriki, S., Takemura, A. (2015)
  $A$-hypergeometric distributions and Newton polytope. arXiv: 1510.02269.

\bibitem{charalambides2005}
  Charalambides, C.A. (2005)
  \textit{Combinatorial Methods in Discrete Distributions}.
  New Jersey: Wiley.

\bibitem{hjort2010}
  Hjort, N.L. Holmes, C, Mueller, P., Walker, S.G. (2010)
  Bayesian Nonparametrics.
  \textit{Camb. Ser. Stat. Probab. Math.}
  Cambridge: Cambridge University Press.

\bibitem{crane2016}
  Crane, H.
  The ubiquitous Ewens sampling formula.
  \textit{Statist. Sci.} to appear.

\bibitem{mano2017+}
  Mano, S.
  \textit{Statistical Inferences with Random Combinatorial Models}.
  JSS Research Series in Statistics, SpringerBriefs in Statistics,
  Springer, to appear.

\bibitem{aldous1985}
  Aldous D.J. (1985)
  Exchangeability and related topics. In: Ecole d'\'Et\'e de Probabilit\'es de
  Saint Flour,
  \textit{Lecture Notes in Math.}, Vol. 1117.
  Berlin: Springer.

\bibitem{arratia2003}
  Arratia, R., Barbour, A.D., Tavar\'e, S. (2003)
  \textit{Logarithmic Combinatorial Structures: a Probabilistic Approach.
    EMS Monogr. Math.}
  Z\"urich: European Mathematical Society.
  
\bibitem{pitman2006}
  Pitman, J. (2006)
  Combinatorial Stochastic Processes. In: Ecole d'\'Et\'e de Probabilit\'es de
  Saint Flour,
  \textit{Lecture Notes in Math.}, Vol. 1875.
  Berlin: Springer.

\bibitem{mano2017}
  Mano, S. (2017)
  Extreme sizes in the Gibbs-type exchangeable random partitions.
  \textit{Ann. Inst. Statist. Math.},
  \textbf{69}, 1--37.
  
\bibitem{nakayama2011}
  Nakayama, H., Nishiyama, K., Noro, M., Ohara, K., Sei, T., Takayama, N.,
  Takemura, A. (2011)
  Holonomic gradient descent and its application to the Fisher-Bingham
  integral.
  \textit{Adv. Appl. Math.},
  \textbf{47}, 639--658.

\bibitem{ohara2015}
  Ohara, K., Takayama, N.: Pfaffian systems of $A$-hypergeometric systems II
  -- holonomic gradient method. arXiv: 1505.02947.

\bibitem{takayamaweb}
  Takayama, N. References for the Holonomic Gradient Method (HGM) and
  the Holonomic Gradient Descent Method (HGD):
  {\tt http://www.math.kobe-u.ac.jp/OpenXM/Math/hgm/ref-hgm.html}

\bibitem{sibuya1993}
  Sibuya, M. (1993)
  A random-clustering process.
  \textit{Ann. Inst. Statist. Math.},
  \textbf{45}, 459--465.

\bibitem{comtet1974}
  Comtet, L. (1974)
  \textit{Advanced Combinatorics.}
  Dordrecht: Ridel.

\bibitem{hartshorne1977} Hartshorne, R. (1977)
  \textit{Algebraic Geometry. Graduate Texts in Math.},
  Vol. 52.
  New York: Springer.
  
\bibitem{cattani1999} Cattani, E., D'Andrea, C., Dickenstein, A. (1999)
  Rational solutions of the A-hypergeometric system associated with
  a monomial curve.
  \textit{Duke Math. J.},
  \textbf{99}, 179--207.

\bibitem{saito2010} Saito, M., Sturmfels, B., Takayama, N. (2010)
  \textit{Gr\"obner deformations of Hypergeometric Differential Equations.
    Algorithms Comput. Math.},
  Vol. 6.
  Berlin: Springer.

\bibitem{sturmfels1996} Sturmfels, B. (1996)
  Gr\"obner Bases and Convex Polytopes.
  \textit{Univ. Lecture Ser.}, Vol. 8,
  Providence: American Mathematical Society.

\bibitem{hara2010} Hara, H., Takemura, A., Yoshida, R. (2010)
  On connectivity of fibers with positive marginals in multiple logistic
  regression.
  \textit{J. Multivariate Anal.},
  \textbf{101}, 99--925.
  
\bibitem{stanley1999}
  Stanley, R.P. (1999)
  \textit{Ennumerative Combinatorics}, Vol. 2.
  New York: Cambridge University Press.

\bibitem{lehmann2005}
  Lehmann, E.L., Romano, J.P. (2005)
  \textit{Testing Statistical Hypothesis}, 3rd. edn.
  New York: Springer.

\bibitem{diaconis1998a} Diaconis, P., Sturmfels, B. (1998)
  Algebraic algorithms for sampling from conditional distributions.
  \textit{Ann. Statist.},
  \textbf{26}, 363--397.

\bibitem{aoki2012}
  Aoki, S., Hara, H., Takemura, A. (2012)
  \textit{Markov Bases in Algebraic Statistics}.
  New York: Springer.

\bibitem{stewart1977} Stewart, F.M. (1977)
  Computer algorithm for obtaining a random set of allele frequencies for
  a locus in an equilibrium population.
  \textit{Genetics},
  \textbf{86}, 482--483.

\bibitem{diaconis1998b}
  Diaconis, P., Eisenbud, B., Sturmfels, B. (1998)
  Lattice walks and primary decomposition. In: Mathematical Essays in Honor of
  Gian-Carlo Rota (Cambridge, MA, 1996).
  \textit{Progr. Math.},
  \textbf{161}, pp. 173-193.
  Boston: Birkh\"auser.

\bibitem{lee2013} Lee, J., Quintana, F.A., M\"uller, P. and Trippa, L. (2013)
  Defining predictive probability functions for species sampling models.
  \textit{Statistical Sci.},
  \textbf{28}, 209--222.
  
\bibitem{vershik1996}
  Vershik, A.M. (1996)
  Statistical mechanics of combinatorial partitions, and their limit
  configurations.
  \textit{Funct. Anal. Appl.},
  \textbf{30}, 90--105.

\bibitem{keener1978}
  Keener, R., Rothman, E., Starr, N. (1978)
  Distribution of partitions.
  \textit{Ann. Statist.},
  \textbf{15}, 1466--1481.

\bibitem{lehmann1998}
  Lehmann, E.L., Casella, G. (1998)
  \textit{Theory of Point Estimation}, 2nd. edn.
  New York: Springer.

\bibitem{gnedin2005} Gnedin, A., Pitman, J. (2005)
  Exchangeable Gibbs partitions and Stirling triangles.
  \textit{Zap. Nauchn. Sem. S. POMI},
  \textbf{325}, 83--102.

\bibitem{pitman1997} Pitman, J., Yor, M. (1997)
  The two-parameter Poisson-Dirichlet distribution derived from a stable
  subordinator.
  \textit{Ann. Probab.},
  \textbf{25},
  855--900.

\bibitem{ishwaran2003} Ishwaran, H., James, L.F. (2003)
  Generalized weighted Chinese restaurant processes for species sampling
  mixture models.
  \textit{Statist. Sinica},
  \textbf{13}, 1211--1235.

\bibitem{pitman1995} Pitman, J. (1995)
  Exchangeable and partially exchangeable random partitions.
  \textit{Probab. Theory Related Fields},
  \textbf{102}, 145--158.
  
\bibitem{lijoi2007}
  Lijoi, A., Mena, R.H., Pr\"unster, I. (2007)
  Bayesian nonparametric estimation of the probability of discovering new
  species.
  \textit{Biometrika},
  \textbf{94} 769--786.
  
\bibitem{sibuya2014}
  Sibuya, M. (2014)
  Prediction in Ewens-Pitman sampling formula and random samples from number
  partitions.
  \textit{Ann. Inst. Statist. Math.},
  \textbf{66} 833--864.

\bibitem{brown1986} Brown, L.D. (1986)
  \textit{Fundermentals of Statistical Exponential Families with Applications
    in Statistical Decision Theory.}
  Hayward: Institute of Mathematical Statistics.
  
\bibitem{barndorff-nielsen2014} Barndorff-Nielsen, O.E. (2014)
  \textit{Information and Exponential Families in Statistical Theory.}
  Chichester: Wiley.

\bibitem{drton2007} Drton, M. and Sullivant, S. (2007)
  Algebraic statistical models.
  \textit{Statist. Sinica.},
  \textbf{17}, 1273--1297.

\bibitem{amari2000}
  Amari, S., Nagaoka, H. (2000)
  \textit{Methods of Information Geometry. Transl. Math. Monogr.}
  Providence: American Mathematical Society.

\bibitem{eriksson2006} Eriksson, E., Fienberg, S.E., Rinaldo, A., Sullivant, S.
  (2006)
  Polyhedral conditions for the nonexistence of the MLE for hierarchical
  log-linear models.
  \textit{J. Symbolic. Comput.}
  \textbf{41}, 222-233 (2006)

\bibitem{shlyk2005} Shlyk, V.A. (2005)
  Polytopes of partitions of numbers.
  \textit{European J. Combin.}
  \textbf{26}, 1139-1153 (2005)

\bibitem{levin1977}
  Levin, B., Reeds, J. (1977)
  Compound multinomial likelihood functions are unimodal: proof of
  a conjecture of I.J. Good.
  \textit{Ann. Statist.}
  \textbf{5}, 79--87.

\bibitem{baayen2001} Baayen, R.H. (2001)
  \textit{Word frequency distribution.}
  Dordrecht: Kluwer Academic Publishers.
    
\bibitem{hibi2014}
  Hibi, T. (eds) (2013)
  \textit{Gr\"obner Bases: Statistics and Computing.}
  Tokyo: Springer.

\bibitem{goto2016} Goto, Y., Matsumoto, K.
  Pfaffian equations and contiguity relations of the hypergeometric function
  of type $(k+1,k+n+2)$ and their applications. arXiv: 1602.01637

\bibitem{hashiguchi2013} Hashiguchi, H., Numata, Y., Takayama, N.,
  Takemura, A. (2013)
  The holonomic gradient method for the distribution function of the largest
  root of a Wishart matrix.
  \textit{J. Multivariate Anal.},
  \textbf{117}, 296--312 (2013)

\end{thebibliography}
\end{document}